\title[Estimates for compression norms]
{Estimates for compression norms and additivity violation in quantum information}
\author {Beno\^\i{}t Collins, Motohisa Fukuda, Ping Zhong}
\address{Department of Mathematics, University of Ottawa, Kyoto University} \email{collins@uottawa.ca}
\address{Zentrum Mathematik, M5, Technische Universit\"at M\"unchen, Boltzmannstrasse 3, 85748 Garching, Germany} 
\email{m.fukuda@tum.de}
\address{School of Mathematics and Statistics, Wuhan University, No.299, Ba Yi Road, Wuhan, Hubei 430072, China}
\email{pzhong@indiana.edu}
\theoremstyle{plain}
\newtheorem{lemma}{Lemma}[section]
\newtheorem{theorem}[lemma]{Theorem}
\newtheorem{proposition}[lemma]{Proposition}
\newtheorem{corollary}[lemma]{Corollary}
\theoremstyle{definition}
\newtheorem*{definition}{Definition}
\theoremstyle{remark}
\newtheorem*{remark}{Remark}
\newtheorem*{example}{Example}
 \DeclareMathOperator{\Tr}{Tr}
\newcommand{\C}{{\mathbb{C}}}
\newcommand{\R}{{\mathbb{R}}}
\begin{document}

\begin{abstract}
The free contraction norm (or the $(t)$-norm) was introduced in \cite{BCN1} as a tool to compute the typical
location of the collection of singular values associated to a random subspace of the tensor product of 
two Hilbert spaces. In turn, it was used in \cite{BCN2013} 
in order to obtain sharp bounds for the violation of the additivity of the minimum output entropy
for random quantum channels with Bell states. 
This free contraction norm, however, is difficult to compute explicitly. The purpose of this note is
to give a good estimate for this norm. Our technique is based on results of super convergence in the
context of free probability theory. As an application, we give a new, simple and conceptual
proof of the violation of the additivity of the minimum output entropy.
\end{abstract}

\maketitle

\section{Introduction}

In a $II_1$\ factor, let $M_k$ be the abelian $k$-dimensional subalgebra generated by
$k$ selfadjoint orthogonal projection of trace $1/k$ and $p_t$ be 
a projection of normalized trace $t\in (0,1]$ which is free from $M_k$.
Given a Hermitian operator $a\in M_k$, we define the quantity $||a||_{(t)}=||p_tap_t||$.
This notion was introduced in \cite{BCN1} and shown to be a norm. 
Its explicit computation is in general quite involved, even if not impossible.
In this paper, we are interested in bounds for $||a||_{(t)}$ in terms of quantities that are better understood
and easier to compute (combinations of $L^p$ norms in our case).
We are also interested in  applications of the bounds to quantum information theory.

\subsection{The problem and statement of results}

Given a Hermitian operator $a\in M_k$, we would
like to have simple  bounds for $||a||_{(t)}$. 
We are primarily interested in 
upper bounds, but our techniques also give lower bounds which have their own interests.

Given a projection $p_t$ free with $M_k$ of trace $t\in (0,1]$,
due to a result of Nica and Speicher \cite{NicaS1996}, the distribution of $t^{-1}p_tap_t$ is the same 
as the distribution of the fractional convolution power $\mu_a^{\boxplus 1/t}$
defined in the same paper, where $\mu_a$ is the spectral measure of $a$.   
Thus, the problem to estimate $||a||_{(t)}$ is reduced to study the bounds of the measure $\mu_a^{\boxplus 1/t}$.
When $1/t=n \in \mathbb{N}$, one can use Bercovici and Voiculescu's superconvergence result \cite{BV1995sup} to 
estimate the size of the support of $\mu_a^{\boxplus n}$. A strengthened inequality can be obtained
by using Kargin's result \cite{Kargin2007}, again for the case $1/t=n$. 

We adapt some techniques developed by Biane \cite{Biane1997} (see also \cite{Huang}) to study the support of $\mu_a^{\boxplus T}$, where $T=1/t$.
Our main results are Theorems  \ref{thm:upper-bound} and \ref{thm:lower-bound}. We summarize the
result in Theorems  \ref{thm:upper-bound} as follows and general results for upper and lower bounds will be given in Section 2.

\begin{theorem}
Given any Hermitian operator $a\in M_k$ such that $L^- \leq  a\leq L^+$ with $L^\pm \in \mathbb R$.
Let $\sigma(a)^2=\tau(a^2)-(\tau(a))^2$, and $t\in (0,1]$. 
Assume that for any atom $\xi_0$ of $\mu_a$, we have $\mu_a(\{ \xi_0\})\leq 1-1/k$, then we have
\begin{enumerate}
\item for any $0<t\leq 1/k$, the measure $\mu^{\boxplus 1/t}$ has no atomic part, and
\[
||a||_{(t)}\leq \max \left\{ \left| t L^\pm \pm 2 \sigma(a) \sqrt{t(1-t)} + (1-t) \tau(a) \right| \right\}.
\]
\item when $1\geq t>1/k$, 
\[
||a||_{(t)}\leq \max \left\{L^\pm, \left| t L^\pm \pm 2 \sigma(a) \sqrt{t(1-t)} + (1-t) \tau(a) \right| \right\}
\]
\end{enumerate}
\end{theorem}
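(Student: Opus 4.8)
The plan is to reduce the estimation of $\|a\|_{(t)}$ to controlling the support of the free convolution power $\mu_a^{\boxplus T}$ with $T = 1/t \geq 1$, and then to bound that support by comparing with a reference measure whose free convolution powers are explicitly known — namely a Bernoulli (two-point) distribution with the same mean and variance as $\mu_a$. First I would invoke the Nica--Speicher identification: the distribution of $t^{-1} p_t a p_t$ equals $\mu_a^{\boxplus T}$, so $\|a\|_{(t)} = t \cdot r(\mu_a^{\boxplus T})$, where $r(\nu)$ denotes the largest absolute value in the support of a compactly supported measure $\nu$ (I also need the smallest, so really I track both edges $L^+_T$ and $L^-_T$ of $\operatorname{supp}\mu_a^{\boxplus T}$). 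The whole theorem then amounts to the two claims: (i) when $T \geq k$ the measure has no atoms, and (ii) the edges satisfy $L^\pm_T \leq T L^\pm \pm 2\sigma(a)\sqrt{T(T-1)} + (1-T)\tau(a)$ after multiplying through by $t = 1/T$; the $\max$ with $L^\pm$ in case (2) accounts for the fact that for $T < k$ an atom at $L^\pm$ may survive and the continuous part need not reach past it.

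Next I would set up the monotonicity/comparison step, which is the technical heart. Following Biane's subordination approach (as in \cite{Biane1997, Huang}), the edge of $\operatorname{supp}\mu_a^{\boxplus T}$ is governed by the function $\psi_{\mu_a}(z) = z + (T-1)\,\omega(z)$-type relations, or more concretely by the behavior of the $R$-transform / Cauchy transform near the edge. The key monotonicity input is that if $\mu$ and $\nu$ have the same first two moments and $\nu$ is the Bernoulli law supported on $\{\tau(a) - \sigma(a)\sqrt{\mu_a(\{L^+\})^{-1}-1}, \dots\}$ — more cleanly, a symmetric two-point mass chosen to have mean $\tau(a)$ and variance $\sigma(a)^2$ — then the free convolution power $\nu^{\boxplus T}$ dominates $\mu^{\boxplus T}$ in the sense of having at least as large a support edge, because adding free copies "pushes mass outward" at a rate controlled precisely by the variance. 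For the Bernoulli case one computes $\nu^{\boxplus T}$ explicitly: its absolutely continuous part is a (rescaled, shifted) free-Poisson/Marchenko--Pastur-type arcsine profile supported on $[(1-T)\tau(a) + T\,\ell^- , (1-T)\tau(a) + T\,\ell^+]$ with edges given exactly by $T\tau(a) \pm 2\sigma(a)\sqrt{T(T-1)}$ once the hypothesis $\mu_a(\{\xi_0\}) \leq 1 - 1/k$ (equivalently $T \geq k \Rightarrow$ the atomic weight times $T$ exceeds... ) is used to rule out a surviving atom. Combining the explicit Bernoulli edge with the comparison inequality, and then intersecting with the a priori bound $L^- \leq a \leq L^+$ (which gives, via the same subordination, $L^\pm_T$ no worse than the trivial bound when $T$ is small), yields both cases.

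I would organize it as: Step 1, the Nica--Speicher reduction and definition of edges; Step 2, the atom-count lemma — the number/weight of atoms of $\mu^{\boxplus T}$ is $T\mu(\{\xi_0\}) - (T-1)$ when positive and zero otherwise (this is the standard formula, and the hypothesis $\mu_a(\{\xi_0\}) \leq 1 - 1/k$ with $T = 1/t \geq k$ forces it to be $\leq 0$), giving the no-atom statement in (1); Step 3, the variance-comparison estimate for the continuous edge, invoking superconvergence (Bercovici--Voiculescu \cite{BV1995sup}, Kargin \cite{Kargin2007}) to guarantee the edge is actually attained and the density behaves like a square root there, so the comparison with the Bernoulli reference is legitimate; Step 4, assembling the bound and taking the relevant maxima. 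The main obstacle I anticipate is Step 3: making the comparison $\mu \mapsto \mu^{\boxplus T}$ genuinely monotone in the support edge under a two-moment constraint is not a formal consequence of the definitions — one has to argue it through the subordination function $\omega_T$ and show that the relevant fixed-point equation for the edge moves in the right direction when variance increases, which requires some care about whether the extremal configuration is the Bernoulli law and about the regularity (continuity up to the edge, no extra atoms merging in) that superconvergence is exactly designed to supply.
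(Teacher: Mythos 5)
Your Steps 1 and 2 match the paper exactly: the Nica--Speicher identification $\mu_{t^{-1}p_tap_t}=\mu_a^{\boxplus 1/t}$, and the Belinschi--Bercovici atom criterion $\mu^{\boxplus T}(\{\alpha\})=T\mu(\{\alpha/T\})-(T-1)$ with the hypothesis $\mu_a(\{\xi_0\})\le 1-1/k$ (so $T\ge k$ kills all atoms). So far so good.

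The gap is exactly where you flagged it, in Step~3, and it is not merely a matter of ``care'': the proposed comparison principle is false. If $\nu$ is a two-point law with the same mean and variance as $\mu$, the edges of $\nu^{\boxplus T}$ do \emph{not} dominate those of $\mu^{\boxplus T}$. Take $\mu=(1-\epsilon)\delta_0+\epsilon\delta_N$ with $\epsilon$ small and $N$ large: the variance is $\epsilon(1-\epsilon)N^2$, so the matched Bernoulli law sits near $\epsilon N \pm N\sqrt{\epsilon(1-\epsilon)}\approx N\sqrt\epsilon$, while $\operatorname{supp}\mu^{\boxplus T}$ reaches essentially up to $N$ for $T$ close to~$1$. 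This is also why the theorem's bound is of the form $L^\pm + 2\sigma(a)\sqrt{T-1}+(T-1)\tau(a)$, with both the operator-norm bound $L^\pm$ \emph{and} the variance term appearing \emph{additively}: a comparison depending only on the first two moments cannot recover the $L^\pm$ contribution, and ``intersecting with the a priori bound'' as you propose doesn't produce a sum.

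What the paper actually does in place of Step~3 is a direct elementary estimate, no comparison at all. Write $F_\mu(z)=z-\tau(a)+\int (\xi-z)^{-1}\,d\rho(\xi)$ via the Nevanlinna representation, where $\rho$ is a positive measure with $\rho(\mathbb R)=\sigma(a)^2$ and $\operatorname{supp}\rho\subset[L^-,L^+]$. The boundary of the subordination domain $\Omega_T=\omega_T(\mathbb C^+)$ is the graph of a function $f_T$, and the set $B_T=\{x:f_T(x)>0\}$ is characterized by $\int(\xi-x)^{-2}d\rho(\xi)>\frac{1}{T-1}$. Chebyshev-type bounding gives $B_T\subset[L^--\sigma(a)\sqrt{T-1},\,L^++\sigma(a)\sqrt{T-1}]$, and then one estimates $H_T(z)=z+(T-1)\bigl(\tau(a)+\int(z-\xi)^{-1}d\rho(\xi)\bigr)$ at the endpoints of this interval, using $\int(z-\xi)^{-1}d\rho\le \sigma(a)^2/\bigl(\sigma(a)\sqrt{T-1}\bigr)=\sigma(a)/\sqrt{T-1}$ once $z$ is at distance $\ge\sigma(a)\sqrt{T-1}$ from $\operatorname{supp}\rho$. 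Since $x\mapsto H_T(x+if_T(x))$ is increasing, this traps the absolutely continuous support $A_T$ inside $[L^--2\sigma(a)\sqrt{T-1}+(T-1)\tau(a),\,L^++2\sigma(a)\sqrt{T-1}+(T-1)\tau(a)]$, which after multiplying by $t=1/T$ is precisely the claimed bound. No superconvergence, no extremality of the Bernoulli law, and no monotonicity principle for support edges is needed; I'd redirect your Step~3 to this direct bound on $B_T$ and $H_T$.
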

This theorem is restated as Theorem \ref{thm:upper-bound},
where the statements are adapted to our applications.  

\subsection{Application to the additivity of the minimum output entropy}

In the seventies, Holevo introduced a one-shot (non-asymptotic) quantity $\chi$ \cite{Holevo1973},
which turned out to describe the rate of classical information transmission through a quantum channel
without entangled inputs \cite{Holevo1998,SchumacherWestmoreland1997}.
This quantity was conjectured to be additive, in the sense that for all quantum 
channels $\Phi, \Psi$,
\begin{equation}
\chi(\Phi \otimes \Psi) = \chi(\Phi) + \chi(\Psi).
\end{equation}
With an application of this property  the classical capacity of $\Phi$ would be equal to its one-shot capacity $\chi$ for any quantum channel $\Phi$.

Subsequently, Shor showed \cite{Shor2004} that the additivity of $\chi$ is equivalent to similar properties of other quantities of 
interest in quantum information, the foremost being the minimum output entropy of channels \cite{KingRuskai2001}
\begin{equation}
H^{\min}(\Phi) = \min_{X \in S_d} H(\Phi(X)).
\end{equation}
Here and below, $S_d$ stands for the set of quantum states:
\[S_d = \{ X\in M_d(\C) : X\geq 0, \mathrm{Tr}X=1\}.\]
The focus of the community shifted to showing additivity for the \emph{minimum output entropy (MOE)}, or its $p$-variants, 
called \emph{R\'enyi entropies}. These are defined for probability vectors $x \in \Delta_k$ by
\begin{equation}
H_p(x) = \frac{1}{1-p}\log\sum_{i=1}^k x_i^p,
\end{equation}
and extended by functional calculus to quantum states $X$. Note that the above definitions are valid for 
$p \in (0, \infty)$, the value in $p=1$, obtained by taking a limit, coinciding with the von Neumann entropy $H$. 
The $\min$ variants are defined by
\begin{equation}
H_p^{\min}(\Phi) = \min_{X \in S_d} H_p(\Phi(X)).
\end{equation}

The additivity property for the quantities $H_p^{\min}$ was shown to be false for $p>1$ in \cite{HaydenWinter2008} and 
later in \cite{AubrunSzarekWerner2010,CollinsNechita2011},
and concrete counterexamples were found for $p>4.79$ in \cite{WernerHolevo2002} and later for  $p>2$ in \cite{GHP2010}.
On the other hand,  additivity violation for $H^{\min}$ was proven by Hastings' \cite{Hastings2009} .
Later, generalizations and improvements were made 
in \cite{FKM2010,BrandaoHorodecki2010,FK2010,AubrunSzarekWerner2011,Fukuda2014,BCN2013}.
Since the resolution of the 
additivity conjecture, efforts have been made to understand more the additivity violation in terms of tensor-products of quantum channels 
in  \cite{CollinsNechita2010, CollinsNechita2010a, CollinsNechita2011,CollinsNechita2011a,CFN2012, CFN2013,FN2014}
into understanding, extending and improving the deviations from additivity.

As a main application of the norm estimate of this paper, we propose a new solution to this problem. 
A non-rigorous statement of additivity violation can be made as follows. 
\begin{theorem}\label{thm:violationF}
Let $\Phi$ be a random quantum channel defined on $M_n(\mathbb C)$,
with environment dimension $k$. 
For $1 \ll k \ll n$ with high probability,
$$ H^{\min} (\Phi \otimes \overline \Phi )  
<  H^{min} (\Phi) +H^{min} (\overline \Phi) .$$ 
\end{theorem}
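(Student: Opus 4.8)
The plan is to realize the random quantum channel $\Phi$ via a Stinespring dilation: $\Phi(X) = (\Id_n \otimes \Tr_k)(V X V^*)$ where $V: \C^n \to \C^n \otimes \C^k$ is a Haar-random isometry, so that the complementary channel $\Phi^c$ has output in $M_k(\C)$. The key structural input is that the conjugate channel $\overline\Phi$ can be coupled to $\Phi$ so that $\Phi \otimes \overline\Phi$ has, in its output, a distinguished maximally entangled (Bell) input state whose image is well understood: feeding the Bell state $|\Omega\rangle \in \C^n \otimes \C^n$ through $\Phi \otimes \overline\Phi$ produces an output with one large eigenvalue of order $1/k + O(1/n)$ together with a flat remainder. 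Computing $H^{\min}$ of this output gives an upper bound on $H^{\min}(\Phi \otimes \overline\Phi)$ of the form $\log k - \beta/k + o(1/k)$ for an explicit constant $\beta > 0$ (morally $\beta = $ something like $\log 2$ up to lower-order corrections).

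Next I would bound $H^{\min}(\Phi)$ from below. Every output $\Phi(X)$ is a compression of a rank-one state, and in the asymptotic regime the relevant singular value distribution is governed by the free compression norm $\|\cdot\|_{(t)}$ with $t = k/n \to 0$: the largest eigenvalue of a typical output $\Phi(X)$ concentrates near the value controlled by $\|a\|_{(t)}$ where $a$ ranges over the appropriate operator (here essentially a rank-one-type perturbation of a scalar, reflecting that inputs are pure). Applying Theorem~\ref{thm:upper-bound} — specifically part (1), valid since $t = k/n \le 1/k$ for $n \gg k^2$, which also guarantees the output spectrum has no atoms — yields an explicit upper bound on the top eigenvalue of $\Phi(X)$ of order $1/k + 2\sqrt{t(1-t)}\,\sigma(a) + \cdots$. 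Converting this spectral bound into an entropy bound via the standard estimate $H(Y) \ge \log k - k\|Y\|_\infty \cdot(\text{const})$ when $Y$ is close to maximally mixed, I get $H^{\min}(\Phi) \ge \log k - \gamma/\sqrt{k} + o(1/\sqrt k)$ or, with the sharper Kargin-type refinement mentioned in the introduction, a bound with the leading correction of smaller order; crucially the correction to $2\log k$ in the sum $H^{\min}(\Phi) + H^{\min}(\overline\Phi)$ is $o(1/k)$-negligible compared to the $-\beta/k$ gain on the tensor side, or at worst the constants can be separated.

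Finally I would assemble the two estimates: for $1 \ll k \ll n$ (quantitatively, $k \to \infty$ and $n/k^2 \to \infty$, say), with probability tending to one,
\[
H^{\min}(\Phi \otimes \overline\Phi) \le \log k - \frac{\beta}{k} + o\!\left(\frac{1}{k}\right)
\quad\text{and}\quad
H^{\min}(\Phi) + H^{\min}(\overline\Phi) \ge 2\log k - o\!\left(\frac{1}{k}\right),
\]
so the strict inequality $H^{\min}(\Phi \otimes \overline\Phi) < H^{\min}(\Phi) + H^{\min}(\overline\Phi)$ holds for all large $k$ once the two displayed error terms are genuinely of smaller order than $\beta/k$. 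The main obstacle — and the place where Theorem~\ref{thm:upper-bound} does the real work — is the lower bound on $H^{\min}(\Phi)$: one must show that \emph{no} input, not just a typical one, can produce an output far from maximally mixed, which requires an $\epsilon$-net argument over all pure inputs $X \in S_n$ combined with the operator-norm (worst-case) nature of $\|a\|_{(t)}$ and a Lipschitz/concentration estimate for the map $X \mapsto \|\Phi(X)\|_\infty$; the superconvergence-based bound is exactly what makes the resulting error term small enough ($o(1/\sqrt k)$, and with Kargin's strengthening potentially $o(1/k)$) to beat the $1/k$-order additivity gap coming from the Bell state on the tensor-product side.
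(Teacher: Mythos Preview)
Your skeleton is right --- Bell state for the tensor upper bound, the $(t)$-norm estimate for the single-channel lower bound --- but the quantitative bookkeeping does not close, and the mechanism you describe for the lower bound is not the one that actually works.

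First, the orders of magnitude are inconsistent. You assert $H^{\min}(\Phi)\ge\log k-\gamma/\sqrt{k}$, then in the final display you upgrade this silently to $2\log k - o(1/k)$. The $1/\sqrt{k}$ bound would \emph{not} beat a $\beta/k$ gain on the tensor side; the hoped-for $o(1/k)$ is never justified. In fact the correct single-channel loss is $O(1/k)$, not $o(1/k)$, so a tensor gain of $\beta/k$ for a fixed constant $\beta$ would not suffice either. What saves the day in the paper is that the Bell-state bound on the complementary side gives $H^{\min}(\Phi^c\otimes\overline{\Phi^c})\le 2(1-t)\log k + h(t)$, and with $t=1/k$ the gain over $2\log k$ is of order $(\log k)/k$, not $c/k$. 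That extra $\log k$ is what beats the $O(1/k)$ single-channel deficit.

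Second, you are missing the device that produces the $O(1/k)$ single-channel bound. The paper does not bound the top eigenvalue of $\Phi(X)$ by applying Theorem~\ref{thm:upper-bound} to some fixed ``rank-one-type'' operator. Instead it uses the dual description of the limit set $K_{k,t}$: for any output $\Phi_n(X)$ one has $\|\Phi_n(X)-\tilde I\|_2^2=\Tr[\Phi_n(X)(\Phi_n(X)-\tilde I)]\le\|\Phi_n(X)-\tilde I\|_{(t)}(1+\epsilon)$, and then Theorem~\ref{thm:upper-bound} bounds the right side back in terms of $\|\Phi_n(X)-\tilde I\|_2$. Solving this self-referential inequality yields $\|\Phi_n(X)-\tilde I\|_2\le t\bigl[1+2\sqrt{(1-t)/(tk)}\bigr]$, which with $t=1/k$ is $O(1/k)$, and then $\log k-H\le k\|\cdot\|_2^2=O(1/k)$.

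Third, no $\epsilon$-net or Lipschitz/concentration argument is needed. Uniformity over all inputs comes for free from Theorem~\ref{thm:main-bcn1} (the almost-sure convergence $K_{n,k,t}\to K_{k,t}$ of the entire output eigenvalue set), which is what makes the $(t)$-norm relevant in the first place. Your proposed net argument would be a substantial detour and would reintroduce exactly the measure-concentration machinery the paper is designed to replace.
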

A precise and more general statement of our result on violation is stated in Theorem \ref{thm:violation}.
We refer to Section \ref{sec:violation} to find the definition of random quantum channels 
and our proof methods.

\subsection{Organization of the paper}

This paper is organized as follows. 
First, in Section \ref{section:FP}, after recalling basic facts about freeness and the t-norm (Section \ref{sec:pre})
we provide the main estimates for the $(t)$-norm (Section \ref{sec:main}).
Next, in Section \ref{sec:violation} we turn to applications to QIT. We recall about the standard
strategy to prove existence of counterexamples of additivity of MOE (Section \ref{sec:strategy}) 
and set up our model (Section \ref{sec:model}). 
Then, our application is explained (Section \ref{sec:concentration}) to
prove the additivity violation (Section \ref{sec:violation2}).

\section*{Acknowledgement}

We would like to thank Ion Nechita and Serban Belinschi for useful conversations. 
BC was supported by NSERC, ERA, JSPS and AIMR.
MF was financially supported by the CHIST-ERA/BMBF project CQC and the John Templeton Foundation (ID$\sharp$48322).

\section{Norm estimates}\label{section:FP}

\subsection{Free probability: a primer and reminders}\label{sec:pre}
A $W^*$-probability space is a pair $(\mathcal{A},\tau)$, where
$\mathcal{A}$ is a finite von Neumann algebra and $\tau$ is 
a faithful, normal, trace state of $\mathcal{A}$. Elements in
$\mathcal{A}$ are called random variables. Let 
$\{\mathcal{A}_i\}_{i\in I}$ be a family of unital 
subalgebras of $\mathcal{A}$. They are called  \emph{free}
if for all $a_i\in\mathcal{A}_{j_i}$ such that $\tau(a_i)=0$, where $i=1,\cdots, n$,
we have
\[
 \tau(a_1\cdots a_n)=0,
\]
whenever we have $j_1\neq j_2, \cdots, j_{n-1}\neq j_n$. Collections of random variables
$\{S_i\}_{i\in J}$ are said to be free if the unital subalgebras generated by them are free.

Let $\{ a_i\}_{i=1}^k$ be a family of random variables and let 
$\mathbb{C} \langle X_1,\cdots,X_k\rangle$ be the algebra of noncommutative polynomials
on $\mathbb{C}$ generated by the $k$ free indeterminates $X_1,\cdots, X_k$.
The \emph{joint distribution} of the family $\{ a_i\}_{i=1}^k$ is the linear form
\[
\begin{split}
 \mu_{(a_1,\cdots,a_k)}:\mathbb{C}\langle X_1,\cdots,X_k \rangle&\rightarrow \mathbb{C}\\
        P&\mapsto \tau(P(a_1,\cdots,a_k)).
\end{split}
\]
Given two free random variables $a,b\in\mathcal{A}$, the distribution $\mu_{a+b}$
is uniquely determined by $\mu_a$ and $\mu_b$. The free additive convolution
of $\mu_a$ and $\mu_b$ is defined by $\mu_a\boxplus\mu_b=\mu_{a+b}$. 
When $x=x^*\in\mathcal{A}$, we identify $\mu_x$ with
the spectral measure of $x$ with respect to $\tau$.
The operation $\boxplus$ induces a binary operation 
on the set of probability measures on $\mathbb{R}$.

We now review analytic methods for calculating free convolutions. 
Denote by $M_{\mathbb{R}}$ the set of all probability measures
on $\mathbb{R}$. Given $\mu\in M_{\mathbb{R}}$, the Cauchy 
transform of $\mu$ is defined by 
\[
\mathit{G}_{\mu}(z)=\int_{\mathbb{R}}\frac{1}{z-t}d\mu(t), z\in\mathbb{C}^+.
\]
We set $\mathit{F}_{\mu}(z)=1/\mathit{G}_{\mu}(z)$. 
Given positive constants $\eta$ and $M$, we let
$\Gamma_{\eta,M}=\{z=x+iy:y>M,\text{and}\, |x|<\eta y \}$. For any $\eta>0$,
it is known from \cite{BV1993} that $\mathit{F}_{\mu}$ has a right inverse $\mathit{F}_{\mu}^{-1}$ defined
in $\Gamma_{\eta,M}$ for $M$ sufficiently large. The function $\varphi_{\mu}(z)=\mathit{F}_{\mu}^{-1}(z)-z$
is called the Voiculescu transform of $\mu$. Given two probability measures $\mu,\nu$ on $\mathbb{R}$,
the Voiculescu transform
linearizes free convolution. That is
    \begin{equation}
      \varphi_{\mu\boxplus \nu}(z)=\varphi_{\mu}(z)+\varphi_{\nu}(z)
    \end{equation}
for all $z$ in a region of the form $\Gamma_{\eta, M}$, where all three transforms are defined.    
We refer the reader to \cite{Basic} for an introduction to free convolutions.

\begin{definition}
For a positive integer k, embed $\mathbb{R}^k$ as a self-adjoint real subagebra $M_k$ of a $II_1$ factor $\mathcal{A}$,
spanned by $k$ mutually orthogonal projections of normalized trace $1/k$. Let $p_t$ be a projection of rank $t\in (0,1]$ in
$\mathcal{A}$, free from $M_k$. On the real vector space $\mathbb{R}^k$, we introduce the following quantity, called the $(t)$-norm:
\[
||a||_{(t)}:=||p_tap_t||_{\infty},
\]
where the vector $a\in\mathbb{R}^k$ is identified with its image in $M_k$.
\end{definition}
The reader can find some basic properties of $(t)$-norm from \cite{BCN1}.
This norm turned out to have a connection to the free convolution described above.
To see it, we recall the following result due to Nica and Speicher \cite{NicaS1996}.
\begin{theorem}\label{thm:nica-speicher}
Given a Hermitian operator $a\in M_k$, and $t\in (0,1]$, there exists
a probability measure $\mu_a^{\boxplus 1/t}$ such that
$\varphi_{\mu_a}^{\boxplus 1/t}(z)=(1/t) \varphi_{\mu_a}(z)$.
Moreover, we have
\[
\mu_{t^{-1}p_tap_t}=\mu_a^{\boxplus 1/t}.
\]
\end{theorem}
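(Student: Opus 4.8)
The plan is to construct the measure $\mu_a^{\boxplus 1/t}$ directly as the spectral distribution of the compression $t^{-1}p_tap_t$, and then to verify the Voiculescu-transform identity through the combinatorics of free cumulants; the uniqueness that makes the second displayed equality meaningful will come for free from the injectivity of the Voiculescu transform. Throughout one uses that $a$ is bounded and self-adjoint, so that $\mu_a$ is compactly (indeed finitely) supported and every distribution appearing below is determined by its moments, equivalently by its free cumulants — in particular the general existence theorem for free convolution powers $\mu^{\boxplus s}$, $s\geq 1$, will not even need to be invoked, since here $\mu_a^{\boxplus 1/t}$ is produced explicitly.

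First I would set up the compressed space: since $p_t$ is a projection, $p_t\mathcal{A}p_t$ is a finite von Neumann algebra carrying the faithful normal trace $\tau' := t^{-1}\tau(p_t\,\cdot\,p_t)$, and $b := t^{-1}p_tap_t$ is a bounded self-adjoint element of it; let $\mu := \mu_b$ be its spectral distribution with respect to $\tau'$, a compactly supported probability measure on $\mathbb{R}$. Recall that to any compactly supported $\nu \in M_{\mathbb{R}}$ one associates its free cumulants $(\kappa_n(\nu))_{n \geq 1}$, which determine $\nu$ and whose generating function $R_\nu(z) = \sum_{n \geq 0}\kappa_{n+1}(\nu)z^n$ is related to the Voiculescu transform by $\varphi_\nu(z) = R_\nu(1/z)$ on a truncated cone $\Gamma_{\eta,M}$.

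The heart of the matter is the free compression formula of Nica and Speicher \cite{NicaS1996}: since $p_t$ is free from the subalgebra generated by $a$ and $\tau(p_t) = t$, one has
\[
\kappa_n^{\tau'}(p_tap_t,\dots,p_tap_t) = t^{\,n-1}\,\kappa_n^{\tau}(a,\dots,a), \qquad n \geq 1.
\]
This is obtained by expanding the left-hand side via the formula for free cumulants whose entries are products, discarding every term in which a cumulant block mixes the algebra generated by $p_t$ with the algebra generated by $a$ — all such mixed free cumulants vanish, this being precisely the cumulant description of freeness — and bookkeeping the surviving non-crossing partitions, which contribute exactly the factor $t^{n-1}$ together with $\kappa_n^{\tau}(a,\dots,a)$. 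By multilinearity and homogeneity of free cumulants,
\[
\kappa_n(\mu) = \kappa_n^{\tau'}\bigl(t^{-1}p_tap_t,\dots,t^{-1}p_tap_t\bigr) = t^{-n}\cdot t^{\,n-1}\,\kappa_n^{\tau}(a,\dots,a) = \tfrac1t\,\kappa_n(\mu_a), \qquad n \geq 1,
\]
so $R_\mu = \tfrac1t R_{\mu_a}$ and hence $\varphi_\mu(z) = \tfrac1t\,\varphi_{\mu_a}(z)$ on a common cone $\Gamma_{\eta,M}$. This already proves the existence statement, with $\mu_a^{\boxplus 1/t} := \mu$; and since the Voiculescu transform determines the measure — from $\varphi_\nu$ one recovers $F_\nu^{-1} = z + \varphi_\nu$, hence $F_\nu$, hence $G_\nu = 1/F_\nu$, hence $\nu$ by the Stieltjes inversion formula — this $\mu$ is the \emph{unique} probability measure whose Voiculescu transform equals $\tfrac1t\varphi_{\mu_a}$, which is the content of the last displayed identity $\mu_{t^{-1}p_tap_t} = \mu_a^{\boxplus 1/t}$.

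The one real obstacle is the compression formula itself: a complete proof of it requires the apparatus of free cumulants with product arguments, the equivalence between freeness and the vanishing of mixed cumulants, and a somewhat intricate count over the non-crossing-partition lattice — this is exactly the technical input cited from \cite{NicaS1996}, so in this note I would simply quote it. For completeness I would record an alternative, purely analytic route that avoids cumulants: compute the Cauchy transform of $p_tap_t \in \mathcal{A}$ directly by means of the subordination functions governing the distribution of a product $pap$ of two free elements (in the spirit of Biane \cite{Biane1997}), peel off the atom of mass $1-t$ at $0$ coming from $\ker p_t$, and rescale; this reproduces the same $F$-functional relation but is heavier, and the cumulant argument remains the cleaner line.
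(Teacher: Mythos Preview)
The paper does not supply a proof of this theorem at all: it is stated as a quoted result, attributed to Nica and Speicher \cite{NicaS1996}, and used as a black box thereafter. So there is no ``paper's own proof'' to compare against.

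Your proposal is a correct outline of precisely the argument that underlies the cited reference. You build $\mu_a^{\boxplus 1/t}$ as the spectral law of $t^{-1}p_tap_t$ in the compressed algebra $(p_t\mathcal{A}p_t,\,t^{-1}\tau)$, invoke the Nica--Speicher compression identity
\[
\kappa_n^{\tau'}(p_tap_t,\dots,p_tap_t)=t^{\,n-1}\kappa_n^{\tau}(a,\dots,a),
\]
rescale to obtain $\kappa_n(\mu_b)=t^{-1}\kappa_n(\mu_a)$, and pass to the Voiculescu transform. This is exactly the route taken in \cite{NicaS1996}, and you are right to flag that the compression identity is the only nontrivial step and to quote it rather than reprove it. The uniqueness remark via Stieltjes inversion is also in order, since $\mu_a$ (hence $\mu_b$) is compactly supported and determined by moments.

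In short: there is nothing to correct, and nothing to compare --- you have supplied the proof that the paper chose to omit.
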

Therefore, $||a||_{(t)}$ is $t$ times the maximum between the upper bound and minus the lower bound of the support of the probability
measure $\mu_a^{\boxplus 1/t}$, and we try to get good bounds for them. 
For this purpose, we introduce the following notation; for a compactly supported measure $\mu$ on $\mathbb{R}$, we denote
\[
 [\mu]=\max\{|v|:v\in supp(\mu)\}.
\]
One can estimate $\mu^{\boxplus 1/t}$ for the case when $1/t$ is an integer, say $n$,
by Bercovici and Voiculescu's superconvergence result in \cite{BV1995sup}. 
The main result in this paper implies that
\[
 \lim_{n\rightarrow\infty}\frac{[\mu^{\boxplus n}]}{2\sqrt{n}}=1
\] 
when the first moment of $\mu$ is zero and the second moment of $\mu$ is one. 
Thus, we have
\[||a||_{(t)}= \tau(a)+2\sqrt{t}\sqrt{\tau(a^2)-\tau(a)^2}+o(\sqrt{t})\]
when $t=1/n$.
Kargin obtained a more precise bound for $[\mu^{\boxplus n}]$. By using \cite[Example $1$]{Kargin2007}, one can 
obtain the following inequality
\begin{equation}\label{eq:006}
||a||_{(t)}\leq  \tau(a)+2\sqrt{t}\sqrt{\tau(a^2)-\tau(a)^2}+
5t\frac{||a-\tau(a)||^3}{\tau(a^2)-\tau (a)^2}
\end{equation}
for $t=1/n$.

In \cite[Proposition 3.7]{BCN1}, another bound for $\mu^{\boxplus 1/t}$ was obtained. 
However, there is a minor mistake in the proof. 
We provide a slightly different approach based on techniques developed by Biane in \cite{Biane1997} and 
correct the mistake in \cite{BCN1}. 

\subsection{Main norm estimates}\label{sec:main}
In principle, from the Cauchy transform $G_\mu$ we can recover the probability measure $\mu$:
\begin{proposition} [Stieltjes-Peron inversion formula] \label{prop:SP}
\[
\mu [a,b] = - \lim_{\epsilon \to \infty} \frac 1 \pi \int_a^b \Im G_\mu (x - i \epsilon ) dx
\]
when $\mu(a)=\mu(b)=0$.
\end{proposition}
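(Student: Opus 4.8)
The plan is to recognize the right-hand side as the integral of $\mu$ against the Poisson kernel of the upper half-plane, which is an \emph{approximate identity}, and then to pass to the limit. Throughout write $P_\epsilon(s)=\frac{1}{\pi}\,\frac{\epsilon}{s^2+\epsilon^2}$ and work with $z=x+i\epsilon$, $\epsilon>0$ (for which $\Im G_\mu(z)\le 0$, so that the right-hand side is indeed nonnegative; the variant written with $x-i\epsilon$ is obtained through $G_\mu(\bar z)=\overline{G_\mu(z)}$). First I would separate real and imaginary parts in the integrand of the Cauchy transform: since
\[
\frac{1}{(x-t)+i\epsilon}\;=\;\frac{(x-t)-i\epsilon}{(x-t)^2+\epsilon^2},
\]
integrating in $t$ against $\mu$ gives the pointwise identity $-\frac{1}{\pi}\,\Im G_\mu(x+i\epsilon)=\int_{\mathbb{R}}P_\epsilon(x-t)\,d\mu(t)$.

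Next I would integrate this identity over $x\in[a,b]$ and interchange the order of integration. Since $P_\epsilon\ge 0$ and $\mu$ is a probability measure, Tonelli's theorem applies with no integrability concern and yields
\[
-\frac{1}{\pi}\int_a^b \Im G_\mu(x+i\epsilon)\,dx\;=\;\int_{\mathbb{R}}\Theta_\epsilon(t)\,d\mu(t),\qquad \Theta_\epsilon(t):=\int_a^b P_\epsilon(x-t)\,dx .
\]
The inner integral is elementary, $\Theta_\epsilon(t)=\frac{1}{\pi}\bigl(\arctan\tfrac{b-t}{\epsilon}-\arctan\tfrac{a-t}{\epsilon}\bigr)$, so $0\le\Theta_\epsilon(t)\le 1$ for every $t$ and every $\epsilon>0$, and one checks the pointwise convergence $\Theta_\epsilon(t)\to \mathbf{1}_{(a,b)}(t)+\tfrac12\,\mathbf{1}_{\{a,b\}}(t)$ as $\epsilon\downarrow 0$.

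Finally, dominated convergence with the constant dominating function $1\in L^1(\mu)$ gives
\[
\lim_{\epsilon\to 0^+}\Bigl(-\frac{1}{\pi}\int_a^b \Im G_\mu(x+i\epsilon)\,dx\Bigr)\;=\;\mu\bigl((a,b)\bigr)+\tfrac12\mu(\{a\})+\tfrac12\mu(\{b\}),
\]
and the hypothesis $\mu(\{a\})=\mu(\{b\})=0$ collapses the right-hand side to $\mu((a,b))=\mu([a,b])$, which is the claim.

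There is no substantial obstacle here: the argument is a textbook approximate-identity computation, and the two interchanges of limit and integral are justified immediately by Tonelli (nonnegativity) and by dominated convergence (uniform bound by $1$). The one point requiring genuine care is the behaviour of the mass $\Theta_\epsilon$ at the endpoints $a$ and $b$, where the Poisson kernel contributes limiting mass $\tfrac12$; discarding this boundary contribution is precisely the role of the non-atomicity hypothesis $\mu(\{a\})=\mu(\{b\})=0$, and without it the same computation produces the value $\mu((a,b))+\tfrac12\mu(\{a\})+\tfrac12\mu(\{b\})$.
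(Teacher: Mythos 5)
Your argument is the standard Poisson-kernel/approximate-identity proof, and it is correct and complete: the Tonelli interchange, the closed form $\Theta_\epsilon(t)=\frac{1}{\pi}\bigl(\arctan\frac{b-t}{\epsilon}-\arctan\frac{a-t}{\epsilon}\bigr)$, the uniform bound $0\le\Theta_\epsilon\le 1$, the pointwise limit $\Theta_\epsilon\to\mathbf 1_{(a,b)}+\tfrac12\mathbf 1_{\{a,b\}}$, and the use of the non-atomicity hypothesis at the endpoints are all handled correctly. The paper itself does not prove this proposition --- it is quoted as a classical fact --- so there is no alternative route to compare against; you have supplied precisely the standard textbook argument that the paper implicitly relies on.

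One small point is worth making explicit, since you only gesture at it. You correctly read the paper's $\lim_{\epsilon\to\infty}$ as a typo for $\lim_{\epsilon\to 0^+}$, and you work with $x+i\epsilon$ where the sign in front of the integral is the one shown. But your appeal to $G_\mu(\bar z)=\overline{G_\mu(z)}$ gives $\Im G_\mu(x-i\epsilon)=-\Im G_\mu(x+i\epsilon)$, so the statement exactly as printed in the paper (with both the leading minus sign and the argument $x-i\epsilon$) has the wrong overall sign: the right-hand side would be $-\mu\bigl((a,b)\bigr)$, not $\mu[a,b]$. Your proof shows that one must either replace $x-i\epsilon$ by $x+i\epsilon$ (the form you actually prove) or drop the leading minus sign; merely invoking the conjugation symmetry, as your parenthetical does, does not reconcile the paper's displayed formula with the correct result. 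This is a defect in the paper's statement rather than in your argument, but your write-up should say so rather than suggesting the two forms are interchangeable as written.
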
 
In particular, if $G$ is continuous the non-atomic part of  $\mu$ is given by $-\frac 1 \pi \Im G_\mu(x)$.
Hence, for our sake, it is best to get the Cauchy transform $G_{\mu^{\boxplus 1/t}}$ directly, 
but not easy except for special cases, see the example below. 
This is why, we try to understand the behavior of support of $\mu^{\boxplus 1/t}$ by obtaining good bounds.

Firstly, let us recall the following result by Belinschi and Bercovic \cite{BB2004}.
\begin{theorem}
  Given $\mu\in M_{\mathbb{R}}$, for any $T>1$, there exists
a measure $\mu^{\boxplus T}$ such that 
\[
\varphi_{\mu^{\boxplus T}}(z)=T\varphi_{\mu}(z). 
\]
Moreover, there exists an injective analytic map  $\omega_T:\mathbb{C}^+\rightarrow\mathbb{C}^+$
such that 
\begin{enumerate}
  \item $\mathit{F}_{\mu^{\boxplus T}}(z)=\mathit{F}_{\mu}(\omega_T(z))$;
\item $\omega_T(z)=\frac{1}{T}z+\frac{T-1}{T}\mathit{F}_{\mu^{\boxplus T}}(z)$, and hence 
 the function $\omega_T$ is the right inverse of the function 
      \[ H_T(z)=Tz+(1-T)\mathit{F}_{\mu}(z)
           =z+(T-1)(z-\mathit{F}_{\mu}(z)), \]
    for $z\in\mathbb{C}^+$. In other words, $H_T(\omega_T(z))=z$; 
\item  The function $\omega_T$ has a continuous, one-to-one 
extension to the set $\overline{\mathbb{C}^+}$ (see also \cite{BB2005}).
\end{enumerate}
\end{theorem}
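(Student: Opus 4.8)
The statement to prove is the Belinschi–Bercovici theorem on the subordination function $\omega_T$ for fractional free convolution powers. Let me sketch how I'd prove it.

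The plan is to construct the measure $\mu^{\boxplus T}$ via the Voiculescu transform and then establish the subordination relation through a fixed-point argument on the upper half-plane. First I would recall that for $T \geq 1$, the candidate Voiculescu transform $T\varphi_\mu$ satisfies the Pick/Nevanlinna conditions needed to be the Voiculescu transform of an actual probability measure: since $\varphi_\mu(z) = F_\mu^{-1}(z) - z$ has nonpositive imaginary part on a truncated cone $\Gamma_{\eta,M}$ and $\varphi_\mu(z)/z \to 0$, the same holds for $T\varphi_\mu$, and Bercovici–Voiculescu's characterization (via \cite{BV1993}) then yields a unique probability measure $\mu^{\boxplus T}$ with $\varphi_{\mu^{\boxplus T}} = T\varphi_\mu$. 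This gives existence; the content is in producing the analytic subordination map $\omega_T$ and proving it extends continuously to $\overline{\mathbb{C}^+}$.

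For part (2), observe that the identity $\varphi_{\mu^{\boxplus T}}(z) = T\varphi_\mu(z)$ rewritten in terms of $F$'s reads $F_{\mu^{\boxplus T}}^{-1}(z) = T F_\mu^{-1}(z) - (T-1)z$ on the cone. Setting $\omega_T = F_\mu^{-1} \circ F_{\mu^{\boxplus T}}$ (initially on the cone) and applying $F_\mu$ is the natural move, but the cleanest route is the one indicated in the statement: define $H_T(z) = Tz + (1-T)F_\mu(z) = z + (T-1)(z - F_\mu(z))$ and show $\omega_T$ is its right inverse. Since $z \mapsto z - F_\mu(z)$ maps $\mathbb{C}^+$ into $\overline{\mathbb{C}^-}$ (because $F_\mu(z) - z$ is a Nevanlinna function with nonnegative imaginary part — the Julia–Carathéodory type bound $\Im F_\mu(z) \geq \Im z$), $H_T$ maps $\mathbb{C}^+$ into $\mathbb{C}^+$ when $T \geq 1$. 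I would then invoke the standard argument (as in Biane \cite{Biane1997} or Belinschi–Bercovici) that $H_T$ is a conformal-type map admitting an analytic right inverse on all of $\mathbb{C}^+$: either through Denjoy–Wolff / Earle–Hamilton fixed-point reasoning applied to the iteration $f_{z}(w) = z - (T-1)(w - F_\mu(w))$, whose unique attracting fixed point is $\omega_T(z)$, or by directly checking injectivity of $H_T$ and surjectivity onto $\mathbb{C}^+$ via a degree/boundary argument. Once $\omega_T$ is the right inverse of $H_T$, the relations $F_{\mu^{\boxplus T}}(z) = F_\mu(\omega_T(z))$ and $\omega_T(z) = \frac1T z + \frac{T-1}{T}F_{\mu^{\boxplus T}}(z)$ follow by unwinding the definitions on the cone and then propagating by analytic continuation, since both sides are analytic on $\mathbb{C}^+$ and agree on an open set.

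The main obstacle — and the heart of the theorem — is part (3), the continuous one-to-one extension of $\omega_T$ to $\overline{\mathbb{C}^+}$. The subtlety is that $F_\mu$ itself need not extend continuously to the real line (it can blow up at atoms), yet $\omega_T$ does, for every $T > 1$; this is a regularization phenomenon special to $T > 1$. I would follow Biane's approach: show that $\Im \omega_T(z)$ stays bounded below away from zero on compact subsets of $\mathbb{R}$ that avoid the (at most countable) set of "bad" points, using the factor $(T-1)$ to get a quantitative lower bound $\Im \omega_T(x + i0) \geq$ something positive, and then handle the finitely many exceptional points separately. Concretely, from $\Im z = \Im \omega_T(z) \cdot (1 + (T-1)\,\partial(\cdots))$-type estimates one extracts that $\omega_T$ cannot approach the real axis except in controlled ways; injectivity on the boundary then comes from injectivity in the interior plus an argument-principle/open-mapping consideration. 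I expect the estimates controlling $\Im \omega_T$ near the real axis — the key being to exploit $T > 1$ to push $\omega_T$ into the interior — to be where essentially all the work lies, and I would lean directly on \cite{Biane1997} and \cite{BB2005} for the technical lemmas on boundary behavior of Nevanlinna functions rather than reproving them.
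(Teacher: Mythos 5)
The paper does not actually prove this theorem; it is quoted verbatim from Belinschi--Bercovici \cite{BB2004}, so there is no ``paper's proof'' to compare against. Your sketch is nevertheless a reasonable outline of the original argument, but it contains one genuine error that matters.

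You write that since $z\mapsto z-F_\mu(z)$ takes values in $\overline{\mathbb{C}^-}$, the map $H_T(z)=z+(T-1)(z-F_\mu(z))$ sends $\mathbb{C}^+$ into $\mathbb{C}^+$ for $T\geq 1$, and you then propose to check injectivity and surjectivity of $H_T$ on all of $\mathbb{C}^+$. This has the sign backwards. From $\Im F_\mu(z)\geq\Im z$ one gets $\Im H_T(z)=\Im z-(T-1)\bigl(\Im F_\mu(z)-\Im z\bigr)\leq \Im z$, so $H_T$ \emph{lowers} imaginary parts and in general pushes points of $\mathbb{C}^+$ \emph{below} the real axis (concretely, for the semicircle law $H_T(i\epsilon)\to -(T-1)i$ as $\epsilon\to 0^+$). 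The correct picture, which is precisely what the paper sets up immediately afterward via the region $\Omega_T=\{z\in\mathbb{C}^+:\Im H_T(z)>0\}$ and its boundary curve $y=f_T(x)$, is that $\omega_T$ maps $\mathbb{C}^+$ biholomorphically onto the proper subregion $\Omega_T\subsetneq\mathbb{C}^+$, and it is $H_T$ restricted to $\Omega_T$ (not to all of $\mathbb{C}^+$) that is the inverse. Your ``degree/boundary argument for $H_T$ on $\mathbb{C}^+$'' therefore does not go through as stated: $H_T$ is neither a self-map nor injective on $\mathbb{C}^+$.

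The other route you mention is the right one and self-corrects the issue: the iteration map $f_z(w)=z-(T-1)\bigl(w-F_\mu(w)\bigr)$ does map $\mathbb{C}^+$ to $\mathbb{C}^+$ (since $\Im f_z(w)=\Im z+(T-1)(\Im F_\mu(w)-\Im w)\geq\Im z>0$), and Denjoy--Wolff/Earle--Hamilton gives a unique attracting fixed point $\omega_T(z)$, which then automatically lands in $\Omega_T$. That is essentially how Belinschi--Bercovici and Biane proceed. Your identification of part (3), the continuous one-to-one boundary extension, as the technical heart is also correct, and leaning on \cite{Biane1997} and \cite{BB2005} for the boundary estimates is appropriate. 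So the proposal is salvageable, but you should drop the claim that $H_T$ preserves $\mathbb{C}^+$ and replace the ``direct injectivity/surjectivity of $H_T$'' alternative with the restriction of $H_T$ to $\Omega_T$.
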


Secondly, let $\Omega_T=\omega_T(\mathbb{C}^+)$, so that
\[
\Omega_T = \{ z \in \mathbb C^+: \Im H_T (z)>0 \} 
\]
because $\omega_T(H_T(z)) =z$ on $\Omega_T$ has the analytic extension to the right hand side. 
Importantly, we can analyze $\Omega_T$ by using the funciton $H_T$,
in fact;
\begin{lemma}\label{lemma:Omega} 
For a self-adjoint operator $a\in\mathcal{A}$, let $\mu=\mu_a$. Then, 
there exists a continuous function
$f_T:\mathbb{R}\rightarrow \mathbb{R}^+\cup \{0\} $ such that
\[
 \partial\Omega_T=\{x+iy: y=f_T(x) \}. 
\]
\end{lemma}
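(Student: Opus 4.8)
The plan is to analyze the set $\Omega_T = \{z \in \mathbb{C}^+ : \Im H_T(z) > 0\}$ by studying the sign of $\Im H_T$ along vertical lines. First I would write, for $z = x+iy$ with $y>0$,
\[
\Im H_T(x+iy) = Ty + (1-T)\,\Im F_\mu(x+iy),
\]
and recall that for the self-adjoint $a$ with $\mu = \mu_a$ compactly supported, $F_\mu = 1/G_\mu$ is analytic on $\mathbb{C}^+$ with $\Im F_\mu(x+iy) \ge y > 0$ there (a standard Nevanlinna-type property: $G_\mu$ maps $\mathbb{C}^+$ to $\mathbb{C}^-$ and $F_\mu$ satisfies $\Im F_\mu(z) \ge \Im z$). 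Hence, since $T>1$ so that $1-T<0$, the function
\[
y \mapsto \frac{\Im H_T(x+iy)}{y} = T + (1-T)\,\frac{\Im F_\mu(x+iy)}{y}
\]
is what I want to control. The key monotonicity input is that $y \mapsto \Im F_\mu(x+iy)/y$ is non-decreasing in $y$ — this again follows from the Nevanlinna integral representation of $F_\mu$ (or equivalently from the fact that $\Im F_\mu(x+iy) - y$ is the imaginary part of a Pick function vanishing at infinity, hence $\Im F_\mu(x+iy)/y$ increases from its value at $0^+$ up to $1$ as $y \to \infty$). Consequently $\Im H_T(x+iy)/y$ is non-increasing in $y$, tends to $T + (1-T) = 1 > 0$ as $y \to \infty$, so for each fixed $x$ the set of $y>0$ with $\Im H_T(x+iy)>0$ is an interval of the form $(f_T(x), \infty)$, where $f_T(x) \ge 0$ is the unique zero of $y \mapsto \Im H_T(x+iy)$ when it is positive, and $f_T(x) = 0$ otherwise. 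This already shows $\Omega_T = \{x+iy : y > f_T(x)\}$ and identifies $\partial\Omega_T$ with the graph of $f_T$.

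It then remains to prove that $f_T$ is continuous and $\mathbb{R}$-valued (finite everywhere). Finiteness is immediate from the above since $\Im H_T(x+iy)/y \to 1$ uniformly-enough in $x$ (because $\Im F_\mu(x+iy)/y \to 1$ as $y\to\infty$, uniformly in $x$ on $\mathbb{R}$ for $\mu$ compactly supported), giving a uniform upper bound on $f_T$; combined with a local lower bound this also prevents $f_T$ from blowing up. For continuity I would use that $H_T$ is jointly continuous (indeed real-analytic away from $\mathrm{supp}\,\mu$, and continuous up to the boundary by part (3) of the Belinschi–Bercovici theorem applied through $\omega_T$), that $\partial_y \Im H_T(x+iy) = T + (1-T)\,\partial_y \Im F_\mu(x+iy)$, and that at a point $y = f_T(x) > 0$ the strict monotonicity just established forces the derivative in $y$ to be strictly negative there; the implicit function theorem then gives local continuity (indeed smoothness) of $f_T$ near such points, while at points where $f_T(x) = 0$ one argues continuity directly from the uniform convergence estimate and the sandwich $0 \le f_T \le$ (small) in a neighborhood.

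The main obstacle I expect is the behavior of $f_T$ at the transition points — the boundary of the set $\{x : f_T(x) > 0\}$ — where $f_T$ touches zero but the $y$-derivative of $\Im H_T$ may vanish, so the implicit function theorem is not directly available and one must instead extract continuity from the monotonicity and uniform-in-$x$ asymptotics by a careful $\epsilon$–$\delta$ argument. A secondary technical point is justifying the Nevanlinna/Pick properties of $F_\mu$ (in particular the monotonicity of $\Im F_\mu(x+iy)/y$ in $y$) in exactly the form needed; this is classical but should be cited precisely, e.g.\ from \cite{BV1993} or \cite{Biane1997}. Everything else is routine once the correct monotone quantity $\Im H_T(x+iy)/y$ is isolated.
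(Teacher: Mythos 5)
Your overall strategy is exactly the paper's: use the Nevanlinna representation of $F_\mu$ (here $F_\mu(z)=-\tau(a)+z+\int (\xi-z)^{-1}\,d\rho(\xi)$), compute $\Im H_T$ along vertical lines, and exploit monotonicity in $y$ to conclude that $\Omega_T$ is an upper region whose boundary is a graph. The paper just carries out the computation explicitly through $\rho$ and refers to \cite{Biane1997}, \cite{Huang} for the continuity details, while you phrase the monotonicity abstractly through $\Im F_\mu(x+iy)/y$ — in principle an equivalent route.

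However, the monotonicity direction is reversed, and this makes the middle step of your argument logically inconsistent as written. From the representation one gets
\[
\frac{\Im F_\mu(x+iy)}{y}=1+\int_{\mathbb{R}}\frac{d\rho(\xi)}{(\xi-x)^2+y^2},
\]
which is \emph{non-increasing} in $y$ (strictly decreasing unless $\mu$ is a Dirac), decreasing from its value at $0^+$ (possibly $+\infty$) down to $1$ as $y\to\infty$ — not increasing up to $1$ as you claim. Consequently, since $1-T<0$, $\Im H_T(x+iy)/y=T+(1-T)\,\Im F_\mu(x+iy)/y$ is \emph{non-decreasing} in $y$, not non-increasing. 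Your stated inference — that a quantity non-increasing in $y$ and tending to $1>0$ has positivity set $(f_T(x),\infty)$ — is false: such a quantity is $\geq 1$ for all $y>0$, giving $\Omega_T=\mathbb{C}^+$ and $f_T\equiv 0$, which is wrong in general. The correct conclusion (positivity set of the form $(f_T(x),\infty)$, possibly empty intersection with a neighborhood of $0$) follows precisely from the corrected, non-decreasing monotonicity of $\Im H_T(x+iy)/y$, which is what the paper's proof uses via the statement that $y\mapsto\int d\rho(\xi)/((\xi-x)^2+y^2)$ is positive and decreasing. Once you flip both signs, your argument coincides with the paper's; the remaining continuity discussion (implicit function theorem where $f_T>0$, plus an $\epsilon$–$\delta$ argument at the zero set) is a reasonable expansion of what the paper outsources to \cite{Biane1997}.
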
 
\begin{proof} 

First, we  write $\mathit{F}_{\mu}$ as
\[ 
  \mathit{F}_{\mu}(z)=-\tau(a)+z+\int_{\mathbb{R}}\frac{1}{\xi-z}d\rho(\xi)
\]
where 
$\rho$ is a positive measure satisfying $\rho(\mathbb{R})=\sigma(a)^2=\tau(a^2)-\tau(a)^2$,
according to \cite[Proposition 2.2]{Maassen}. 
Then, we have 
\begin{equation}
  H_T(z)=z+ (T-1)\left(\tau(a)+ \int_{\mathbb{R}}\frac{1}{z-\xi}d\rho(\xi) \right)
\end{equation}
and
\[
 \Im H_T(x+iy)=y\left(1-\int_{\mathbb{R}}\frac{T-1}{(\xi-x)^2+y^2}d\rho(\xi)\right).
\]
Next, notice that we always can find large enough $y$ such that $ \Im H_T(x+iy) >0$.  
Finally, our claim follows from the fact that
the function \[y\rightarrow \int_{\mathbb{R}}\frac{1}{(\xi-x)^2+y^2}d\rho(\xi)\]
is a positive and decreasing function of $y$.
See \cite{Biane1997} and \cite{Huang} for more details.
\end{proof} 
From the above proof we get the following corollary:
\begin{corollary} \label{corollary:Omega} 
In the same settings of Lemma \ref{lemma:Omega}
\begin{enumerate}
\item $f_T(x) >0$ if and only if 
\[
 \frac{1}{T-1}< \int_{\mathbb{R}}\frac{1}{(\xi-x)^2}\rho(d\xi)
\]
\item In this case, $f_T(x)$ is the unique solution satisfying
\begin{equation}
    \frac{1}{T-1}=\int_{\mathbb{R}}\frac{1}{(\xi-x)^2+f_T(x)^2}d\rho(\xi),
\end{equation}
and 
\[
H_T(x+if_T(x)) \in \mathbb R.
\]
\end{enumerate} 
\end{corollary}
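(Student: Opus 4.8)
The plan is to extract both statements directly from the computation of $\Im H_T$ carried out in the proof of Lemma \ref{lemma:Omega}, together with the monotonicity observation at its end. Recall that
\[
\Im H_T(x+iy)=y\left(1-\int_{\mathbb{R}}\frac{T-1}{(\xi-x)^2+y^2}d\rho(\xi)\right),
\]
so for $y>0$ we have $x+iy\in\Omega_T$ (i.e.\ $\Im H_T(x+iy)>0$) precisely when
\[
g_x(y):=\int_{\mathbb{R}}\frac{1}{(\xi-x)^2+y^2}d\rho(\xi)<\frac{1}{T-1}.
\]
For fixed $x$, the function $y\mapsto g_x(y)$ is continuous and strictly decreasing on $(0,\infty)$, with $g_x(y)\to 0$ as $y\to\infty$ and with $\lim_{y\to 0^+} g_x(y)=\int_{\mathbb{R}}\frac{1}{(\xi-x)^2}d\rho(\xi)\in(0,\infty]$ by monotone convergence. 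Since $\partial\Omega_T$ is the graph $y=f_T(x)$, the value $f_T(x)$ is the threshold separating $\{y:g_x(y)\ge 1/(T-1)\}$ from $\{y:g_x(y)<1/(T-1)\}$.

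For part (1): $f_T(x)>0$ means there is a nontrivial interval $(0,f_T(x))$ on which $g_x(y)\ge 1/(T-1)$, which by strict monotonicity and continuity of $g_x$ forces $\lim_{y\to 0^+} g_x(y)>1/(T-1)$, i.e.\ $\int_{\mathbb{R}}\frac{1}{(\xi-x)^2}d\rho(\xi)>\frac{1}{T-1}$; conversely, if this strict inequality holds then by continuity $g_x(y)>1/(T-1)$ for all small $y>0$, so those points are not in $\Omega_T$, forcing $f_T(x)>0$. (One should note $g_x(0^+)$ can be $+\infty$, in which case the inequality is read as automatically satisfied; this happens exactly when $x$ is an atom of $\rho$, and the argument is unaffected.) For part (2): when $f_T(x)>0$, strict monotonicity of $g_x$ gives uniqueness of the solution $y$ of $g_x(y)=1/(T-1)$, and that solution must be $f_T(x)$ since it is the unique zero of $\Im H_T(x+i\cdot)$ on $(0,\infty)$; hence $\Im H_T(x+if_T(x))=0$, i.e.\ $H_T(x+if_T(x))\in\mathbb{R}$.

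The only real subtlety, which is more bookkeeping than obstacle, is handling the boundary behavior as $y\to 0^+$: the integral $\int\frac{1}{(\xi-x)^2}d\rho(\xi)$ may diverge, so one must phrase part (1) so that the ``if and only if'' remains correct in that case (the divergent case always yields $f_T(x)>0$), and one must make sure the continuity and one-to-one extension of $\omega_T$ to $\overline{\mathbb{C}^+}$ from the Belinschi--Bercovici theorem is what legitimizes reading off $f_T$ from $\Im H_T=0$ at the boundary. Everything else is immediate from the strict monotonicity of $g_x$.
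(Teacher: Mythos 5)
Your argument is correct and is exactly the one the paper has in mind: the paper states the corollary with only the remark that it follows ``from the above proof,'' and your write-up simply makes explicit the use of the formula for $\Im H_T(x+iy)$ and the strict monotonicity and continuity of $y\mapsto\int \frac{1}{(\xi-x)^2+y^2}\,d\rho(\xi)$, together with the boundary limit by monotone convergence. The remark about needing the Belinschi--Bercovici continuous extension of $\omega_T$ is unnecessary here (one only needs continuity of $g_x$ on $(0,\infty)$ and the graph description of $\partial\Omega_T$ from Lemma~\ref{lemma:Omega}), but this overcaution does not affect the correctness of the proof.
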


Thirdly, having Lemma \ref{lemma:Omega} and Corollary \ref{corollary:Omega} in our minds, we define two sets $A_T, B_T \subseteq \mathbb R$:
\begin{align}
  A_T &=\{H_T(x+if_T(x)): x\in B_T\}; \\
 B_T&=\{x:f_T(x)>0, x\in\mathbb{R} \}.\label{eq:02}
\end{align}
Here, it is easy to see that $A_T=\{x: \Im \omega_T(x)>0\}=\{x:\Im \mathit{F}_{\mu^{\boxplus T}}(x)>0\}$. 
Then, we have the following statement:
\begin{lemma} \label{lemma:inclusion}
If $B_T \subseteq [x_1,x_2]$ then $A_T \subseteq [H_T(x_1+if_T(x_1)),H_T(x_2+if_T(x_2))]$. 
\end{lemma}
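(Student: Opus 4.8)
The plan is to realize $A_T$ as the image of the connected set $B_T$ (or rather, to reduce to that case) under the real-valued map $x \mapsto H_T(x+if_T(x))$, and to show that this map is monotone increasing on $B_T$, so that an interval enclosure of $B_T$ transports to an interval enclosure of $A_T$ with the stated endpoints. First I would record that, by Corollary \ref{corollary:Omega}, on $B_T$ the function $f_T$ satisfies the defining equation $\frac{1}{T-1} = \int_{\mathbb R}\frac{1}{(\xi-x)^2 + f_T(x)^2}\,d\rho(\xi)$ and $H_T(x+if_T(x)) \in \mathbb R$; moreover $f_T$ is continuous by Lemma \ref{lemma:Omega}. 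Writing $g(x) := H_T(x+if_T(x))$, the key computation is the derivative $g'(x)$. Differentiating $H_T(z) = z + (T-1)\bigl(\tau(a) + \int \frac{1}{z-\xi}\,d\rho(\xi)\bigr)$ along the curve $z(x) = x + i f_T(x)$ and using the chain rule gives $g'(x) = (1 + i f_T'(x))\, H_T'(z(x))$. Since $g$ is real-valued and (on the interior of $B_T$, where $f_T>0$) $z(x)$ lies on $\partial\Omega_T$ where $H_T$ is conformal with $\Im H_T = 0$, the quantity $H_T'(z(x))$ is, up to a positive factor coming from $1 + i f_T'(x)$, forced to be real; a direct estimate then shows $g'(x) > 0$. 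Concretely, $H_T'(z) = 1 - (T-1)\int \frac{1}{(z-\xi)^2}\,d\rho(\xi)$, and along the curve one can separate real and imaginary parts using the constraint equation to see that the real part of $(1+if_T'(x))H_T'(z(x))$ is strictly positive.

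The main obstacle I anticipate is two-fold. First, $B_T$ need not be connected in general — it is an open set, possibly a finite union of intervals — so "$B_T \subseteq [x_1,x_2]$" does not immediately let one apply an intermediate value / monotonicity argument on a single interval; one must argue that monotonicity of $g$ holds on \emph{each} component and that the images of the components, while possibly disjoint, are all sandwiched between $g(x_1+if_T(x_1))$ and $g(x_2+if_T(x_2))$. This requires knowing that $f_T$ extends continuously by $0$ to $\overline{B_T}$ and that $g$ extends continuously to the closure (which follows from part (3) of the Belinschi–Bercovici theorem, the continuous extension of $\omega_T$ to $\overline{\mathbb C^+}$, together with $H_T(\omega_T(z)) = z$). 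Second, one must handle the endpoints $x_1, x_2$ themselves: these may or may not lie in $B_T$, and $f_T(x_1), f_T(x_2)$ could be $0$; the statement is still meaningful because $H_T(x+i\cdot 0) = H_T(x)$ is real there, so the endpoint expressions $H_T(x_j + if_T(x_j))$ are well-defined regardless.

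So the steps, in order, are: (i) set $g(x) = H_T(x+if_T(x))$ and note $g$ is real-valued and continuous on $\overline{B_T}$, using Lemma \ref{lemma:Omega}, Corollary \ref{corollary:Omega}, and the continuous extension of $\omega_T$; (ii) compute $g'(x) = (1 + i f_T'(x))H_T'(x + if_T(x))$ on the interior of $B_T$ and show $g'(x) > 0$ by combining the explicit formula for $H_T'$ with the constraint $\frac{1}{T-1} = \int \frac{d\rho(\xi)}{(\xi-x)^2 + f_T(x)^2}$; (iii) conclude that $g$ is strictly increasing on each connected component of $B_T$, hence on each component $g$ maps into $[\inf_{\overline{B_T}} g, \sup_{\overline{B_T}} g]$; (iv) since $\overline{B_T} \subseteq [x_1,x_2]$ and $g$ is increasing on $\overline{B_T}$ read componentwise and continuous up to the boundary, deduce $\inf_{\overline{B_T}} g \geq H_T(x_1 + if_T(x_1))$ and $\sup_{\overline{B_T}} g \leq H_T(x_2 + if_T(x_2))$; (v) recall $A_T = \{H_T(x+if_T(x)) : x \in B_T\} = g(B_T)$ by definition, giving the claimed inclusion $A_T \subseteq [H_T(x_1+if_T(x_1)), H_T(x_2+if_T(x_2))]$.
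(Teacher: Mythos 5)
Your overall strategy matches the paper's: show that $g(x) := H_T(x+if_T(x))$ is increasing and transport the interval $[x_1,x_2]$ through $g$. The paper's one-line proof asserts precisely this, except that the paper says $g$ is increasing on all of $\mathbb{R}$, not just on $B_T$, and that stronger statement is what makes the conclusion immediate.

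Your step (iv) has a genuine gap. From ``$g$ is increasing on each connected component of $\overline{B_T}$'' you cannot deduce $\inf_{\overline{B_T}} g \geq g(x_1)$ or $\sup_{\overline{B_T}} g \leq g(x_2)$: componentwise monotonicity says nothing about how the values on one component compare to values on another, or to the endpoints $x_1,x_2$, which may lie strictly outside $\overline{B_T}$. A toy example: if $B_T$ has two components and $g$ took large values on the left one and small values on the right one, your claimed inequalities would fail even though $g$ is increasing on each piece. What closes this gap is precisely the proposition the paper quotes from Biane (and records, slightly awkwardly, \emph{after} this lemma): on $\mathbb{R}\setminus B_T$ one has $f_T \equiv 0$, so $g = H_T$ there, and $H_T$ is analytic and increasing on $\mathbb{R}\setminus B_T$. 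Combining that with your monotonicity on each component of $B_T$ and continuity of $g$ across component boundaries (which you do address via the continuous extension of $\omega_T$) yields that $g$ is increasing on all of $\mathbb{R}$, whence $A_T = g(B_T) \subseteq g([x_1,x_2]) \subseteq [g(x_1), g(x_2)]$. So the fix is to invoke Biane's result to handle the complement of $B_T$; without it the argument does not go through. The derivative computation $g'(x) = (1+if_T'(x))H_T'(x+if_T(x))$ and the sign claim on $B_T$ are plausible but stated at roughly the same level of detail as the paper itself; the real missing ingredient is the behavior of $g$ off $B_T$.
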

This lemma can be proven from the fact that
the function 
\[
 x\rightarrow H_T(x+if_T(x))
\]
is increasing on $\mathbb{R}$.
For later use, we quote another result from the proof of \cite[Proposition 3]{Biane1997}:
\begin{proposition}
The support of $\rho$, $supp$$(\rho)\subset B_T$.
In particular, the function $H_T$ is defined as an analytic and increasing function
on $\mathbb{R}\backslash B_T$.
\end{proposition}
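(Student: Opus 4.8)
We argue in two steps, matching the two assertions of the statement: first the inclusion $\operatorname{supp}(\rho)\subseteq B_T$, and then the regularity of $H_T$ on $\mathbb R\setminus B_T$. The guiding idea is that, by Corollary~\ref{corollary:Omega}(1), a point $x$ belongs to $B_T$ exactly when $\int_{\mathbb R}(\xi-x)^{-2}\,d\rho(\xi)>\tfrac1{T-1}$, so it is enough to prove that this integral is infinite at every point of $\operatorname{supp}(\rho)$.

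For the inclusion I would first pin down $\rho$ in the present situation. Since $a\in M_k$, its distribution $\mu_a$ is a convex combination of at most $k$ Dirac masses, so $G_{\mu_a}$ and $F_{\mu_a}=1/G_{\mu_a}$ are rational; writing $m\le k$ for the number of distinct atoms of $\mu_a$, we have $G_{\mu_a}=P/Q$ with $Q$ monic of degree $m$, $P$ monic of degree $m-1$, and $\gcd(P,Q)=1$. Performing the partial-fraction expansion of $F_{\mu_a}=Q/P$ and comparing it with the Maassen/Nevanlinna representation $F_{\mu_a}(z)=z-\tau(a)+\int_{\mathbb R}(\xi-z)^{-1}\,d\rho(\xi)$ used in the proof of Lemma~\ref{lemma:Omega}, one reads off $\rho=\sum_{j=1}^{m-1}\rho_j\delta_{\eta_j}$, where $\eta_1,\dots,\eta_{m-1}$ are the zeros of $G_{\mu_a}$ — which strictly interlace the atoms of $\mu_a$ — and $\rho_j=-1/G_{\mu_a}'(\eta_j)>0$ because $G_{\mu_a}'<0$ on $\mathbb R$ off its poles. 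Thus $\operatorname{supp}(\rho)=\{\eta_1,\dots,\eta_{m-1}\}$ is a finite set each point of which is an atom of $\rho$; integrating $(\xi-\eta_j)^{-2}$ over the single atom $\{\eta_j\}$ already produces $+\infty$ (since $\rho(\{\eta_j\})=\rho_j>0$), so $\int_{\mathbb R}(\xi-\eta_j)^{-2}\,d\rho(\xi)=+\infty>\tfrac1{T-1}$ for every $T>1$. By Corollary~\ref{corollary:Omega}(1) this forces $f_T(\eta_j)>0$, i.e. $\eta_j\in B_T$, and hence $\operatorname{supp}(\rho)\subseteq B_T$.

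For the ``in particular'' clause I would argue as follows. By Lemma~\ref{lemma:Omega} the function $f_T$ is continuous, so $B_T=f_T^{-1}\big((0,\infty)\big)$ is open and $\mathbb R\setminus B_T$ is closed and disjoint from $\{\eta_1,\dots,\eta_{m-1}\}$, which is exactly the set of poles of the rational function $H_T(z)=z+(T-1)\big(\tau(a)+\sum_{j}\rho_j(z-\eta_j)^{-1}\big)$. Hence $H_T$ is finite-valued and real-analytic on a neighbourhood of every point of $\mathbb R\setminus B_T$. Differentiating gives $H_T'(x)=1-(T-1)\int_{\mathbb R}(\xi-x)^{-2}\,d\rho(\xi)$, and for $x\in\mathbb R\setminus B_T$ the contrapositive of Corollary~\ref{corollary:Omega}(1) yields $(T-1)\int_{\mathbb R}(\xi-x)^{-2}\,d\rho(\xi)\le1$, so $H_T'(x)\ge0$; therefore $H_T$ is increasing on $\mathbb R\setminus B_T$.

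The point I expect to demand the most care is the inclusion $\operatorname{supp}(\rho)\subseteq B_T$ for a \emph{general} spectral measure $\mu$, as in Biane's original argument: then a point of $\operatorname{supp}(\rho)$ need not be an atom, the integral $\int(\xi-x)^{-2}\,d\rho$ may remain finite, and one must instead invoke the analytic-continuation and boundary-regularity estimates of \cite{Biane1997}. In the case at hand only the finitely-atomic situation $a\in M_k$ is used, so this difficulty does not arise and the actual content is the explicit description of $\rho$ via the interlacing zeros of $G_{\mu_a}$. A final cosmetic remark: at a point of $\partial B_T$ one has $H_T'=0$, so ``increasing'' is meant here in the weak sense (it is strict on the open set $\mathbb R\setminus\overline{B_T}$).
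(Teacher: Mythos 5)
The paper does not actually prove this proposition: it quotes it as a consequence of the proof of Proposition~3 in \cite{Biane1997}, valid for a general compactly supported $\mu$. Your argument is a self-contained and correct alternative proof that works in the only case the paper actually uses, namely $a\in M_k$. The crux of your approach is to exploit the rational structure: writing $G_{\mu_a}=P/Q$ in lowest terms, you identify $\rho$ in the Maassen representation as the purely atomic measure $\sum_j\rho_j\delta_{\eta_j}$, with the $\eta_j$ being the (interlacing) real zeros of $G_{\mu_a}$ and $\rho_j=-1/G_{\mu_a}'(\eta_j)>0$. Then each $\eta_j$ is an atom of $\rho$, so $\int(\xi-\eta_j)^{-2}\,d\rho=+\infty>1/(T-1)$ and Corollary~\ref{corollary:Omega}(1) forces $\eta_j\in B_T$; this gives $\mathrm{supp}(\rho)\subseteq B_T$. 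Your ``in particular'' step is also correct: $B_T$ is open (continuity of $f_T$), so $\mathbb R\setminus B_T$ avoids the poles $\{\eta_j\}$ of the rational function $H_T$, which is therefore analytic there, and the contrapositive of Corollary~\ref{corollary:Omega}(1) yields $H_T'\ge0$ there. This is exactly the form in which the proposition is later used (analyticity and $H_T'>0$ on $\mathbb R\setminus\overline{B_T}$). Compared with the route the paper relies on, yours buys elementariness and explicitness at the cost of generality: as you rightly note, for a general $\mu$ the measure $\rho$ can have absolutely continuous part with density vanishing at a point of $\mathrm{supp}(\rho)$, making $\int(\xi-x)^{-2}\,d\rho$ finite there, and the argument breaks down — that is why Biane's proof needs genuine boundary-regularity estimates. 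One small caveat worth spelling out: since $\mathbb R\setminus B_T$ may be disconnected, showing $H_T'\ge0$ pointwise only gives monotonicity on each component; global monotonicity across gaps (which Lemma~\ref{lemma:inclusion} uses) requires the extension $x\mapsto H_T(x+if_T(x))$ being increasing. For the way the proposition is invoked later this is harmless, but it is a distinction worth making explicit.
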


Fourthly,  we have the following statement:
\begin{lemma}\label{lemma:2.5}
Given any Hermitian operator $a\in\mathcal{A}$ such that $L^- \leq  a\leq L^+$ with $L^\pm \in \mathbb R$, we have for $T>1$
\[ 
A_T \subset [x_1(T),x_2(T)],
\]
where 
\begin{align}
x_1(T)&=L^- -2\sigma(a)\sqrt{T-1}+(T-1)\tau(a)\label{eq:009}\\
x_2(T)&=L^+ + 2\sigma(a)\sqrt{T-1}+(T-1)\tau(a).\label{eq:010}
\end{align}
\end{lemma}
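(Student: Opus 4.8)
The plan is to control the real set $B_T=\{x:f_T(x)>0\}$ of \eqref{eq:02} and then transport that control to $A_T$ through the map $H_T$ via Lemma \ref{lemma:inclusion}. We may assume $\sigma(a)>0$: if $\sigma(a)=0$ then $a$ is a scalar, the measure $\rho$ vanishes, so $f_T\equiv 0$, $B_T=A_T=\emptyset$, while $x_1(T)=x_2(T)=T\tau(a)$, and the inclusion is vacuous.

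First I would pin down the support of the measure $\rho$ from the representation $F_\mu(z)=z-\tau(a)+\int_{\mathbb R}\frac{1}{\xi-z}\,d\rho(\xi)$ used in the proof of Lemma \ref{lemma:Omega}. Since $L^-\le a\le L^+$, the spectral measure $\mu=\mu_a$ is supported in $[L^-,L^+]$, so $G_\mu$ is analytic on $\mathbb C\setminus[L^-,L^+]$; moreover $G_\mu(x)=\int\frac{d\mu(t)}{x-t}$ is strictly positive for $x>L^+$ and strictly negative for $x<L^-$, hence never vanishes on $\mathbb R\setminus[L^-,L^+]$, so $F_\mu=1/G_\mu$ extends analytically across $\mathbb R\setminus[L^-,L^+]$ and, by uniqueness of the Nevanlinna representation, $\operatorname{supp}(\rho)\subseteq[L^-,L^+]$. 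Next, Corollary \ref{corollary:Omega}(1) says $x\in B_T$ exactly when $\frac{1}{T-1}<\int_{\mathbb R}\frac{d\rho(\xi)}{(\xi-x)^2}$. For $x>L^+$ and $\xi\in[L^-,L^+]$ one has $(\xi-x)^2\ge(x-L^+)^2$, so, since $\rho(\mathbb R)=\sigma(a)^2$, the right-hand side is at most $\sigma(a)^2/(x-L^+)^2$; hence $x\in B_T$ forces $x<c_+:=L^+ +\sigma(a)\sqrt{T-1}$. Symmetrically, $x\in B_T$ with $x<L^-$ forces $x>c_-:=L^- -\sigma(a)\sqrt{T-1}$. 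As $\sigma(a)>0$ we get $c_-<L^-\le L^+<c_+$, so $B_T\subseteq(c_-,c_+)$; in particular $f_T(c_-)=f_T(c_+)=0$, and since $c_\pm\notin[L^-,L^+]\supseteq\operatorname{supp}(\rho)$ the numbers $H_T(c_\pm)$ are real.

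Finally I would feed $B_T\subseteq[c_-,c_+]$ into Lemma \ref{lemma:inclusion} to obtain $A_T\subseteq[H_T(c_-),H_T(c_+)]$, and then estimate the two endpoints using $H_T(z)=z+(T-1)\bigl(\tau(a)+\int_{\mathbb R}\frac{d\rho(\xi)}{z-\xi}\bigr)$. For $z=c_+$ and $\xi\le L^+$ one has $c_+-\xi\ge\sigma(a)\sqrt{T-1}$, so $\int_{\mathbb R}\frac{d\rho(\xi)}{c_+-\xi}\le\sigma(a)^2/(\sigma(a)\sqrt{T-1})=\sigma(a)/\sqrt{T-1}$, which gives $H_T(c_+)\le L^+ +2\sigma(a)\sqrt{T-1}+(T-1)\tau(a)=x_2(T)$; the bound $H_T(c_-)\ge x_1(T)$ is entirely symmetric. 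Combining, $A_T\subseteq[x_1(T),x_2(T)]$. The only step I expect to require real care is the a priori confinement $\operatorname{supp}(\rho)\subseteq[L^-,L^+]$, together with the bookkeeping that places $c_\pm$ outside $B_T$ so that Lemma \ref{lemma:inclusion} returns genuine real endpoints rather than complex ones; once those are in place the remaining inequalities are elementary and use nothing beyond $\rho(\mathbb R)=\sigma(a)^2$.
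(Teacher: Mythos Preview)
Your proof is correct and follows essentially the same route as the paper: bound $\operatorname{supp}(\rho)\subseteq[L^-,L^+]$, deduce $B_T\subseteq[c_-,c_+]$ with $c_\pm=L^\pm\pm\sigma(a)\sqrt{T-1}$, apply Lemma~\ref{lemma:inclusion}, and then estimate $H_T(c_\pm)$ using $\rho(\mathbb R)=\sigma(a)^2$ and the distance from $c_\pm$ to the support of $\rho$. The paper's proof is terser (it simply asserts the support containment and the bound on $B_T$), whereas you supply the analytic justification via the Nevanlinna representation and explicitly handle the degenerate case $\sigma(a)=0$ and the reality of $H_T(c_\pm)$; these additions are sound but do not change the argument's structure.
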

\begin{proof}
By the choice of $\rho$, we have
supp$(\rho)\subset [L^-,L]$.
Recall that $\rho(\mathbb{R})=\sigma(a)^2$.
From (\ref{eq:02}), it is easy to see that 
\[
  B_T\subset 
   [L^- -\sigma(a)\sqrt{T-1},L^+ + \sigma(a)\sqrt{T-1}].
\]
We estimate
\begin{equation}\nonumber
 \begin{split}
 &H_T(L^+ + \sigma(a)\sqrt{T-1})\\
 &\leq L^+ + \sigma(a)\sqrt{T-1}+(T-1)\left[\tau(a)+ \frac{\sigma(a)}{\sqrt{T-1}} \right]\\
 &=L^+ + 2\sigma(a)\sqrt{T-1}+(T-1)\tau(a);
 \end{split}
\end{equation}
and
\[
\begin{split}
  &H_T(L^--\sigma(a)\sqrt{T-1})\\
  &\geq L^- -\sigma(a)\sqrt{T-1}+(T-1)\left[\tau(a)-\frac{\sigma(a)}{\sqrt{T-1}} \right]\\
   &=L^--2\sigma(a)\sqrt{T-1}+(T-1)\tau(a). 
\end{split}
\]
Lemma \ref{lemma:inclusion} completes the proof. 
\end{proof}

To take care of atomic part of $\mu^{\boxplus T}$, we quote the following result from \cite{BB2004}.
\begin{theorem}\label{thm:2.6}
Given $T>1$, a number $\alpha$ is an atoms of $\mu^{\boxplus T}$
if and only if $\alpha/T$ is an atom of $\mu$ such that 
$\mu(\{\alpha/T \})>1-\frac{1}{T}$. In this case,
\[
   \mu^{\boxplus T}(\{\alpha \})=T\mu\left(\left\{\frac{\alpha}{T} \right\}\right)-(T-1).
\]
\end{theorem}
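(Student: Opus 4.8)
Since Theorem~\ref{thm:2.6} is quoted verbatim from \cite{BB2004}, I will only sketch the argument. It rests on the subordination description of $\mu^{\boxplus T}$ recalled above, together with the classical criterion detecting atoms through the reciprocal Cauchy transform: for a probability measure $\nu$ on $\mathbb R$ and $x_0 \in \mathbb R$ one has $\nu(\{x_0\}) = \lim_{\epsilon\downarrow 0} i\epsilon / F_\nu(x_0+i\epsilon)$, and more generally $F_\nu$ admits a Julia--Carath\'eodory derivative at $x_0$ equal to $1/\nu(\{x_0\})$, so that $F_\nu(w)/(w-x_0)\to 1/\nu(\{x_0\})$ as $w\to x_0$ nontangentially; in particular $\nu$ has an atom at $x_0$ exactly when $F_\nu$ extends continuously to $x_0$ with value $0$ and this limit is finite. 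I would state this first as a lemma.

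\emph{The forward implication.} Suppose $\mu^{\boxplus T}(\{\alpha\})=c>0$ and set $w_\epsilon=\omega_T(\alpha+i\epsilon)$. By item~(2) of the Belinschi--Bercovici theorem, $w_\epsilon=\tfrac1T(\alpha+i\epsilon)+\tfrac{T-1}{T}F_{\mu^{\boxplus T}}(\alpha+i\epsilon)$; since $F_{\mu^{\boxplus T}}(\alpha+i\epsilon)\sim i\epsilon/c$, this gives $w_\epsilon\to\alpha/T$ with $w_\epsilon-\alpha/T\sim \tfrac{i\epsilon}{T}\bigl(1+\tfrac{T-1}{c}\bigr)$, a nontangential approach. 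Plugging this into $F_\mu(w_\epsilon)=F_{\mu^{\boxplus T}}(\alpha+i\epsilon)\sim i\epsilon/c$ and applying the derivative criterion at $\beta:=\alpha/T$ yields $1/\mu(\{\beta\})=\lim F_\mu(w_\epsilon)/(w_\epsilon-\beta)=T/(c+T-1)$, i.e. $\mu(\{\beta\})=(c+T-1)/T$. Solving for $c$ gives $c=T\mu(\{\beta\})-(T-1)$, and $c>0$ forces $\mu(\{\beta\})>1-1/T$.

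\emph{The converse.} Suppose $\mu(\{\beta\})=d>1-1/T$ and put $\alpha=T\beta$, $c=Td-(T-1)>0$. Feeding $F_\mu(\beta+i\epsilon)\sim i\epsilon/d$ into $H_T(z)=z+(T-1)(z-F_\mu(z))$ gives $H_T(\beta+i\epsilon)\sim T\beta+i\epsilon\bigl(T-\tfrac{T-1}{d}\bigr)$, and the coefficient is positive \emph{precisely because} $d>1-1/T$, so $\beta+i\epsilon\in\Omega_T$ for small $\epsilon$. As $\omega_T$ is the right inverse of $H_T$ on $\Omega_T$ and extends continuously to $\overline{\mathbb C^+}$, letting $\epsilon\downarrow0$ in $\omega_T(H_T(\beta+i\epsilon))=\beta+i\epsilon$ gives $\omega_T(\alpha)=\beta$, hence $F_{\mu^{\boxplus T}}(\alpha)=F_\mu(\beta)=0$. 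Finally, writing $w_\epsilon=\omega_T(\alpha+i\epsilon)$ in the fixed-point form $w_\epsilon=\tfrac1T(\alpha+i\epsilon)+\tfrac{T-1}{T}F_\mu(w_\epsilon)$ and inserting $F_\mu(w_\epsilon)\sim (w_\epsilon-\beta)/d$, one solves to leading order for $w_\epsilon-\beta\sim i\epsilon d/c$, whence $F_{\mu^{\boxplus T}}(\alpha+i\epsilon)=F_\mu(w_\epsilon)\sim i\epsilon/c$ and $\mu^{\boxplus T}(\{\alpha\})=c=T\mu(\{\beta\})-(T-1)$.

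\emph{Main obstacle.} The only real difficulty is regularity: every "$\sim$" above must be justified as a genuine nontangential limit, rather than a formal expansion. The delicate point is knowing \emph{a priori} that $\omega_T(\alpha+i\epsilon)\to\beta$ nontangentially, so that the Julia--Carath\'eodory derivative of $F_\mu$ at $\beta$ may legitimately be invoked there and the asymptotics are not circular. This is exactly what the boundary-regularity theory of the subordination function $\omega_T$ in \cite{BB2004,BB2005} provides (it maps $\mathbb C^+$ into $\mathbb C^+$ and, at a boundary point with finite positive derivative, is conformal-like), which is the reason one appeals to that machinery rather than reproving the statement from scratch.
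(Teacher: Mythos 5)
The paper does not prove Theorem~\ref{thm:2.6}; it explicitly quotes it from \cite{BB2004}, so there is no in-paper proof to compare against. Your sketch is a correct and faithful outline of the Belinschi--Bercovici argument: the algebra checks out (in the forward direction $w_\epsilon-\alpha/T\sim\frac{i\epsilon}{T}\bigl(1+\frac{T-1}{c}\bigr)$ so that $F_\mu(w_\epsilon)/(w_\epsilon-\beta)\to T/(c+T-1)$, giving $c=T\mu(\{\beta\})-(T-1)$; in the converse, $\Im H_T(\beta+i\epsilon)\sim\epsilon\bigl(T-\frac{T-1}{d}\bigr)>0$ is indeed equivalent to $d>1-1/T$, and solving the fixed-point relation gives $w_\epsilon-\beta\sim i\epsilon d/c$). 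You have also correctly isolated the only real technical content, namely that the Julia--Carath\'eodory derivative of $F_\mu$ at $\beta$ may only be used once one knows $\omega_T(\alpha+i\epsilon)\to\beta$ nontangentially; this is exactly the boundary regularity of the subordination function established in \cite{BB2004,BB2005}, which is why the paper defers to those references rather than reproving the statement.
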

Finally, note that the Cauchy transform of $\mu=\mu_a$ is
\[
G_{\mu} (z)= \sum_i \frac 1 {z-\xi_i}
\]
where $\xi_i$ are the eigenvalues of Hermitian matrix $a$.
Then, by using Proposition \ref{prop:SP}, Lemma \ref{lemma:2.5} and Theorem \ref{thm:2.6},
we obtain an upper bound for $(t)$-norm:

\begin{theorem}\label{thm:upper-bound}
Take a Hermitian matrix $a\in M_k$ such that its eigenspaces have dimensions $D_1,\ldots, D_m$ in descending order,
with corresponding eigenvalues $\xi_1,\ldots,\xi_m$. 
Let $\sigma(a)^2=\tau(a^2)-(\tau(a))^2$, and $t\in (0,1]$. 
\begin{enumerate}
\item  If $D_1 \leq k(1-t)$, then the measure $\mu_a^{\boxplus 1/t}$ has no atomic part, and 
\[
||a||_{(t)}\leq \max \left\{ \left| t L^\pm \pm 2 \sigma(a) \sqrt{t(1-t)} + (1-t) \tau(a) \right| \right\},
\]
for $L^- \leq a \leq L^+$ as in Lemma \ref{lemma:2.5}.
\item 
The above bound holds if all the eigenvalues of $a$ are different and $t \leq 1-\frac 1k$. 
 If, in addition, $\tau(a) =0$, 
\[
 ||a||_{(t)}\leq t L + 2 \sigma(a) \sqrt{t(1-t)} 
\]
for $L = \| a\|$.
\item If $D_1 > k(1-t)$, then  the measure $\mu_a^{\boxplus 1/t}$ has atomic parts, and
\[
||a||_{(t)}\leq \max \left\{\xi_{i_o}, \left| t L^\pm \pm 2 \sigma(a) \sqrt{t(1-t)} + (1-t) \tau(a) \right| \right\},
\]
where 
\[
\xi_{i_0} = \max \{ \xi_i : D_i > k(1-t) \}
\]
\end{enumerate}
\end{theorem}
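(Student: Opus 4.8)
The plan is to turn the question about $\|a\|_{(t)}$ into a question about the edges of the support of the measure $\mu:=\mu_a^{\boxplus 1/t}$, and then read those edges off from the ingredients already assembled. Write $T=1/t$. The case $T=1$ is immediate, since $\mu=\mu_a$ and $\|a\|_{(1)}=\|a\|=\max_i|\xi_i|$, which is the asserted right-hand side once $L^{\pm}$ are taken to be the extreme eigenvalues; so assume $T>1$. By Theorem \ref{thm:nica-speicher} the spectral measure of $t^{-1}p_tap_t$ is $\mu_a^{\boxplus 1/t}$, hence $\|a\|_{(t)}=\|p_tap_t\|=t\,[\mu]$ with $[\mu]=\max\{|v|:v\in\operatorname{supp}(\mu)\}$. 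Thus it suffices to locate $\operatorname{supp}(\mu)$, which I split into its atomic part and the support of its absolutely continuous part.

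For the atoms, Theorem \ref{thm:2.6} says the atoms of $\mu=\mu_a^{\boxplus T}$ are exactly the points $T\xi_i$ with $\mu_a(\{\xi_i\})=D_i/k>1-1/T=1-t$, i.e.\ $D_i>k(1-t)$. Since $D_1\geq D_2\geq\cdots$, the hypothesis $D_1\leq k(1-t)$ forces the absence of atoms (this is case (1)), while $D_1>k(1-t)$ produces at least one atom (case (3)); rescaling an atom location $T\xi_i$ by $t$ gives the number $\xi_i$, which is where the term $\xi_{i_0}=\max\{\xi_i:D_i>k(1-t)\}$ comes from. (In fact a one-line Cauchy--Schwarz estimate — from $\mu_a(\{\xi_i\})>1-1/T$ one gets $\sqrt{T-1}\,|\tau(a)-\xi_i|<\sigma(a)$ — shows that each such $T\xi_i$ already lies in the interval $[x_1(T),x_2(T)]$ found below, so the $\xi_{i_0}$ term is actually subsumed; we keep it only for transparency.)

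For the absolutely continuous part, by Proposition \ref{prop:SP} its density at a point $x$ equals $-\tfrac1\pi\Im G_{\mu}(x)=-\tfrac1\pi\Im\bigl(1/\mathit{F}_{\mu_a}(\omega_T(x))\bigr)$, using $\mathit{F}_{\mu}=\mathit{F}_{\mu_a}\circ\omega_T$. Since $\mathit{F}_{\mu_a}$ maps $\mathbb{C}^+$ into $\mathbb{C}^+$, this density is strictly positive precisely when $\Im\omega_T(x)>0$, i.e.\ on $A_T$; and on each interval of $\mathbb{R}\setminus\overline{A_T}$ the function $\omega_T$ is real-valued and, by Schwarz reflection together with the meromorphy of $\mathit{F}_{\mu_a}$ away from the finitely many atoms of $\mu_a$, extends analytically, so $G_{\mu}$ is real-analytic there and $\mu$ charges no such interval. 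Hence $\operatorname{supp}(\mu)\subseteq\overline{A_T}\cup\{T\xi_i:D_i>k(1-t)\}$, and Lemma \ref{lemma:2.5} gives $\overline{A_T}\subseteq[x_1(T),x_2(T)]$.

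Finally one assembles and rescales. The last two paragraphs give $[\mu]\leq\max\{|x_1(T)|,|x_2(T)|,\max\{|T\xi_i|:D_i>k(1-t)\}\}$; multiplying by $t$ and using $t(T-1)=1-t$ and $t^2(T-1)=t(1-t)$ one gets $t\,x_1(T)=tL^--2\sigma(a)\sqrt{t(1-t)}+(1-t)\tau(a)$, $t\,x_2(T)=tL^++2\sigma(a)\sqrt{t(1-t)}+(1-t)\tau(a)$, and $t\cdot T\xi_i=\xi_i$, which are exactly the quantities appearing in (1) and (3). For (2): if all eigenvalues of $a$ are distinct then every $D_i=1$, so $D_1=1\leq k(1-t)$ precisely when $t\leq 1-1/k$, putting us in case (1); and if moreover $\tau(a)=0$ one takes $L^{\pm}=\pm\|a\|=\pm L$, so that both numbers $|tL^{\pm}\pm 2\sigma(a)\sqrt{t(1-t)}|$ collapse to $tL+2\sigma(a)\sqrt{t(1-t)}$. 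The step I expect to be the real obstacle is the third paragraph, namely rigorously confining $\operatorname{supp}(\mu)$ to $\overline{A_T}$ together with the atoms — i.e.\ controlling the boundary behaviour of $\omega_T$, and hence of $G_{\mu}$, at and near the atoms of $\mu_a$, so that no spurious singular mass of $\mu$ escapes $[x_1(T),x_2(T)]$; the regularity results of Biane and Belinschi--Bercovici cited above do the heavy lifting, and everything else is bookkeeping with Lemma \ref{lemma:2.5} and Theorem \ref{thm:2.6}.
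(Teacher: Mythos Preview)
Your proposal is correct and follows exactly the paper's route: the paper simply states that the theorem is obtained ``by using Proposition \ref{prop:SP}, Lemma \ref{lemma:2.5} and Theorem \ref{thm:2.6}'', and you have faithfully supplied the assembly --- Theorem \ref{thm:2.6} for the atomic part, the identification of the absolutely continuous support with $\overline{A_T}$ via Stieltjes--Perron and the subordination $\mathit{F}_{\mu}=\mathit{F}_{\mu_a}\circ\omega_T$, Lemma \ref{lemma:2.5} for the enclosure $\overline{A_T}\subset[x_1(T),x_2(T)]$, and the rescaling $\|a\|_{(t)}=t[\mu]$. Your parenthetical Cauchy--Schwarz observation that each atom $T\xi_i$ already lies in $[x_1(T),x_2(T)]$ is a pleasant extra not in the paper, and your flagging of the boundary/singular-continuous issue is appropriate (and indeed handled by the cited Biane and Belinschi--Bercovici regularity results, since $\mu_a$ is finitely supported).
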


\begin{remark}
We have 
$||a-\tau(a)||= \max\{\tau(a), ||a||-\tau(a) \}$
for $a\geq 0$. Therefore, 
we see that the upper bound in Theorem \ref{thm:upper-bound} is better than
the one given by (\ref{eq:006}).
\end{remark}

\begin{example}
It is known in \cite{BB2004} that, if $\mu=\mu_a=\frac{1}{2}(\delta_{-1}+\delta_1)$,
then
\begin{enumerate}
  \item The measure $\mu^{\boxplus T}$ has atom at $T$ if and only if $1<T<2$;
  \item The absolutely continuous part of $\mu^{\boxplus T}$ is supported on $[-2\sqrt{T-1},2\sqrt{T-1}]$.
\end{enumerate}
When $\mu_a$ is such, then 
\[
 ||a||_{(t)}=\left\{
  \begin{array}{lr}
    1 & : \frac{1}{2}\leq t\leq 1\\
    2\sqrt{t(1-t)} & :0< t\leq \frac{1}{2}.
  \end{array}
\right.
\]
And our estimation reads
\[
 ||a||_{(t)}\leq 2\sqrt{t(1-t)}+t,
\]
for $t\in (0,1].$
\end{example}

We now turn to prove a lower bound by using similar estimations.
Let $x_3=\sup\{x:x\in A_T \}$ and $x_4=\sup\{x:x\in B_T \}$.
The next result, taken from \cite[Theorem 4.6]{BB2005} will be a key for us to obtain a lower bound.
\begin{proposition}
For any $z_1,z_2\in\overline{\Omega_T}$, we have
\[
 |H_T(z_1)-H_T(z_2)|\leq 2 |z_1-z_2|.
\]
\end{proposition}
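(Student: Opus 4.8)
The plan is to push everything through the integral representation of $H_T$ already obtained in the proof of Lemma~\ref{lemma:Omega}. There, writing $\mathit F_\mu$ via Maassen's formula, one gets
\[
H_T(z)=z+(T-1)\left(\tau(a)+\int_{\mathbb R}\frac{1}{z-\xi}\,d\rho(\xi)\right),
\]
with $\rho\ge 0$ and $\rho(\mathbb R)=\sigma(a)^2$. Since $\frac{1}{z_1-\xi}-\frac{1}{z_2-\xi}=-\frac{z_1-z_2}{(z_1-\xi)(z_2-\xi)}$, a direct computation yields, for any two points at which the integral converges,
\[
H_T(z_1)-H_T(z_2)=(z_1-z_2)\left(1-(T-1)\int_{\mathbb R}\frac{d\rho(\xi)}{(z_1-\xi)(z_2-\xi)}\right).
\]
Thus the proposition reduces to the pointwise estimate $(T-1)\bigl|\int_{\mathbb R} d\rho(\xi)/((z_1-\xi)(z_2-\xi))\bigr|\le 1$ valid for all $z_1,z_2\in\overline{\Omega_T}$, after which the triangle inequality $|1-w|\le 1+|w|\le 2$ closes the argument.

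To prove this pointwise estimate, the key point is that membership in $\overline{\Omega_T}$ imposes a uniform bound on the quantity $\int d\rho(\xi)/|z-\xi|^2$. By Lemma~\ref{lemma:Omega} and the monotonicity of $y\mapsto\int d\rho(\xi)/((\xi-x)^2+y^2)$ used in its proof, $\overline{\Omega_T}=\{x+iy:\ y\ge f_T(x)\}$; so if $z=x+iy\in\overline{\Omega_T}$ then $|z-\xi|^2=(\xi-x)^2+y^2\ge(\xi-x)^2+f_T(x)^2$, whence
\[
\int_{\mathbb R}\frac{d\rho(\xi)}{|z-\xi|^2}\le\int_{\mathbb R}\frac{d\rho(\xi)}{(\xi-x)^2+f_T(x)^2}\le\frac{1}{T-1}.
\]
Here the last step is the equality in Corollary~\ref{corollary:Omega}(2) when $f_T(x)>0$, and the contrapositive of Corollary~\ref{corollary:Omega}(1) when $f_T(x)=0$; in the latter case $z=x$ lies in $\mathbb R\setminus B_T$, hence outside $\operatorname{supp}(\rho)$, so there is no singularity and $H_T(x)$ is genuinely defined. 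Now Cauchy--Schwarz gives
\[
\left|\int_{\mathbb R}\frac{d\rho(\xi)}{(z_1-\xi)(z_2-\xi)}\right|\le\left(\int_{\mathbb R}\frac{d\rho(\xi)}{|z_1-\xi|^2}\right)^{1/2}\left(\int_{\mathbb R}\frac{d\rho(\xi)}{|z_2-\xi|^2}\right)^{1/2}\le\frac{1}{T-1},
\]
which is exactly what is needed.

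The only point requiring care is the identification of $\overline{\Omega_T}$ together with the claim that the bound $\int d\rho(\xi)/|z-\xi|^2\le 1/(T-1)$ extends to the whole closure, in particular to the real boundary points where $f_T$ vanishes; but this is already encoded in Lemma~\ref{lemma:Omega} and Corollary~\ref{corollary:Omega}, so no new obstacle arises, and the proof collapses to a one-line Cauchy--Schwarz estimate once the representation of $H_T$ is available. An alternative would be to set $z_j=\omega_T(\zeta_j)$, so that $H_T(z_j)=\zeta_j$, and prove the equivalent lower bound $|\omega_T(\zeta_1)-\omega_T(\zeta_2)|\ge\frac12|\zeta_1-\zeta_2|$ from the Nevanlinna representation of $\mathit F_{\mu^{\boxplus T}}$ via $\omega_T=\frac1T z+\frac{T-1}{T}\mathit F_{\mu^{\boxplus T}}$; this works too, but needs the real part of a difference quotient of a reciprocal Cauchy transform to be nonnegative, so the computation with $H_T$ is more economical.
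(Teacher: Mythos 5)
Your proof is correct. The paper itself does not prove this estimate; it simply quotes it from Belinschi--Bercovici, and your argument is a clean self-contained derivation valid in the present setting. Starting from the Maassen/Nevanlinna representation of $F_\mu$ (which requires $\mu=\mu_a$ to have finite second moment, as is the case here), you obtain
$H_T(z_1)-H_T(z_2)=(z_1-z_2)\bigl(1-(T-1)\int d\rho(\xi)/((z_1-\xi)(z_2-\xi))\bigr)$,
and the whole point is that membership of $z=x+iy$ in $\overline{\Omega_T}=\{y\geq f_T(x)\}$ forces $\int d\rho(\xi)/|z-\xi|^2\leq 1/(T-1)$: this is an equality via Corollary \ref{corollary:Omega}(2) on the curved part of the boundary, an inequality via the contrapositive of Corollary \ref{corollary:Omega}(1) on the real part, and strict inequality inside. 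Cauchy--Schwarz then bounds the mixed integral by $1/(T-1)$ and the triangle inequality gives $|1-w|\leq 2$. Two small points you handled correctly and are worth underlining: (i) the closure is exactly $\{y\geq f_T(x)\}$ because $f_T$ is continuous, and (ii) at real boundary points $x$ with $f_T(x)=0$ one has $x\notin B_T\supset\operatorname{supp}(\rho)$, so $\int d\rho/|x-\xi|^2$ is genuinely finite and $H_T(x)$ well-defined (indeed, for atomic $\rho$ the atoms all have $f_T>0$, so they never appear on the real boundary). The only caveat is scope: the cited Belinschi--Bercovici theorem holds for arbitrary probability measures $\mu$ via the full Nevanlinna representation $F_\mu(z)=\alpha+z+\int(1+\xi z)/(\xi-z)\,d\nu(\xi)$; your version covers only the finite-variance case, which is exactly what the paper uses. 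Your closing remark about reformulating the estimate as a lower Lipschitz bound for $\omega_T$ is the standard dual statement and is fine as an aside.
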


\begin{proposition}
We have
\[
 x_3\geq -L+ \left(1+\epsilon(a,t)\right)\sigma(a)\sqrt{T-1},
     +(T-1)\tau(a).
\]
where \[\epsilon(a,t)=\frac{\sigma(a)\sqrt{T-1}}{L+\sigma(a)\sqrt{T-1}}\]
for $T>1$. 
\end{proposition}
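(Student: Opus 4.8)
The plan is to mirror the argument that produced the upper bound in Lemma~\ref{lemma:2.5}, but now to chase a lower estimate for the right endpoint $x_3 = \sup A_T$ instead of an upper one. Recall that $A_T = \{H_T(x+if_T(x)) : x \in B_T\}$ and that $x\mapsto H_T(x+if_T(x))$ is increasing on $\mathbb{R}$ (this is exactly the monotonicity underlying Lemma~\ref{lemma:inclusion}); consequently $x_3 = H_T(x_4 + i f_T(x_4))$ where $x_4 = \sup B_T$. So the whole problem reduces to (i) producing a good lower bound for $x_4$, and then (ii) propagating it through $H_T$ while controlling the imaginary displacement $f_T(x_4)$.

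For step (i), I would exploit Corollary~\ref{corollary:Omega}(1): $x$ lies in $B_T$ precisely when $\frac{1}{T-1} < \int_{\mathbb R}\frac{1}{(\xi-x)^2}\,d\rho(\xi)$. Since $\operatorname{supp}(\rho)\subseteq[L^-,L^+]$ with $L^+ \le L = \|a\|$ and $\rho(\mathbb{R}) = \sigma(a)^2$, for any $x > L$ we have $\int \frac{1}{(\xi-x)^2}\,d\rho(\xi) \ge \frac{\sigma(a)^2}{(x-L)^2}$ (the integrand is smallest when all mass sits at the point of $\operatorname{supp}(\rho)$ closest to, i.e.\ furthest left from, $x$, bounded by $x - L^- \le$ something — more cleanly, $|\xi - x| \le x - L^-$, but for the lower bound on the integral we want an \emph{upper} bound on $|\xi-x|$, which is $x-L^-$; I will in fact use $|\xi - x|\le x - L^-$ only if needed and otherwise the cruder $x-L$). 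Solving $\frac{\sigma(a)^2}{(x-L)^2} > \frac{1}{T-1}$ gives membership in $B_T$ for all $x < L + \sigma(a)\sqrt{T-1}$, hence $x_4 \ge L + \sigma(a)\sqrt{T-1}$. (The asymmetry with Lemma~\ref{lemma:2.5}, where $B_T \subseteq [\,\cdot\,, L^+ + \sigma(a)\sqrt{T-1}]$ was obtained, is that here we need an \emph{inner} bound; the same one-point concentration heuristic applies but in the reverse direction, and I expect the constant to be honest.)

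For step (ii), evaluate $H_T(z) = z + (T-1)\big(\tau(a) + \int \frac{1}{z-\xi}\,d\rho(\xi)\big)$ at $z = x_4 + i f_T(x_4)$ and take real parts. Since $H_T(x_4 + if_T(x_4))\in\mathbb{R}$ by Corollary~\ref{corollary:Omega}(2), $x_3 = \operatorname{Re} H_T = x_4 + (T-1)\tau(a) + (T-1)\operatorname{Re}\int\frac{1}{z-\xi}\,d\rho(\xi)$, and $\operatorname{Re}\frac{1}{z-\xi} = \frac{x_4-\xi}{(x_4-\xi)^2 + f_T(x_4)^2} > 0$ for $x_4 > L^+ \ge \operatorname{supp}\rho$; I would lower-bound this real part using the defining identity $\frac{1}{T-1} = \int \frac{1}{(\xi-x_4)^2 + f_T(x_4)^2}\,d\rho(\xi)$ together with $x_4 - \xi \ge x_4 - L$, getting $\operatorname{Re}\int \ge \frac{x_4 - L}{T-1}$ is too lossy; instead bound $x_4 - \xi \ge$ something proportional to $\sigma(a)\sqrt{T-1}$ to recover the stated correction term $\epsilon(a,t)\,\sigma(a)\sqrt{T-1}$ with $\epsilon(a,t) = \frac{\sigma(a)\sqrt{T-1}}{L + \sigma(a)\sqrt{T-1}}$. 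Feeding $x_4 \ge L + \sigma(a)\sqrt{T-1}$ through $\operatorname{Re} H_T = -L + (x_4 - (-L))$ wait — more carefully, writing $x_4 = -L + (x_4 + L)$ and combining with the positive real part of the integral yields $x_3 \ge -L + (1+\epsilon(a,t))\sigma(a)\sqrt{T-1} + (T-1)\tau(a)$ after the arithmetic is arranged (one should be mindful that the stated inequality actually has $x_3$ bounded \emph{below} by a quantity starting with $-L$, so there must be a reflection/rescaling step converting the right-endpoint estimate into a statement about the maximal modulus, matching the role of $L^-$ in Lemma~\ref{lemma:2.5}).

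\textbf{Main obstacle.} The delicate point is step (ii): controlling $\operatorname{Re}\int \frac{1}{z-\xi}\,d\rho(\xi)$ at the point $z = x_4 + if_T(x_4)$ \emph{without} discarding the contribution of $f_T(x_4)$, since a naive bound $|z - \xi| \ge |x_4 - \xi|$ goes the wrong way when one wants a lower bound on the real part. The right maneuver is presumably to use the equality constraint from Corollary~\ref{corollary:Omega}(2) to trade the unknown $f_T(x_4)$ for $T-1$, and then to invoke the Lipschitz bound $|H_T(z_1) - H_T(z_2)| \le 2|z_1-z_2|$ on $\overline{\Omega_T}$ (the Proposition just quoted) to pass from the boundary point back to a real point, exactly as the constant $2$ in $x_1(T), x_2(T)$ arose. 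Getting the precise shape of $\epsilon(a,t)$ out of this — rather than some weaker $\epsilon$ — will require being careful about which estimate ($x_4 - \xi \ge x_4 - L$ versus a sharper one) is used where, but no genuinely new idea beyond the toolbox already assembled in Section~\ref{sec:main}.
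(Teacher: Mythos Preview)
Your step~(i) contains a direction error that derails the argument. For $x > L^+$ one has $|\xi - x| = x - \xi \geq x - L^+$, hence $\int \frac{1}{(\xi-x)^2}\,d\rho(\xi) \leq \frac{\sigma(a)^2}{(x-L^+)^2}$, not $\geq$; so the conclusion $x_4 \geq L + \sigma(a)\sqrt{T-1}$ is false --- that quantity is in fact the \emph{upper} bound for $x_4$ established in the proof of Lemma~\ref{lemma:2.5}. The paper (working with $a \geq 0$, so that $\operatorname{supp}\rho \subset [0,L]$) instead uses $|\xi - x| \leq x$ to obtain the weaker but correct lower bound $x_4 \geq \sigma(a)\sqrt{T-1}$.

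More importantly, step~(ii) misses the structure of the argument. The paper does not attempt to bound $H_T$ at $x_4$ directly. It evaluates $H_T$ at the auxiliary real point $z_0 = L + \sigma(a)\sqrt{T-1}$, which lies strictly to the right of $\overline{B_T}$ (so $f_T(z_0)=0$ and $z_0 \in \partial\Omega_T$); there the elementary inequality $\int \frac{1}{z_0-\xi}\,d\rho(\xi) \geq \sigma(a)^2/z_0$ gives at once $H_T(z_0) \geq L + (1+\epsilon)\sigma(a)\sqrt{T-1} + (T-1)\tau(a)$ with exactly the stated $\epsilon$. The Lipschitz proposition is then used to compare the values at $z_0$ and at $x_4$ (both in $\overline{\Omega_T}$):
\[
H_T(z_0) - x_3 = H_T(z_0) - H_T(x_4) \;\leq\; 2(z_0 - x_4) \;\leq\; 2L,
\]
the last step using $x_4 \geq \sigma(a)\sqrt{T-1}$. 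Subtracting $2L$ from the lower bound for $H_T(z_0)$ is precisely where the leading $-L$ in the statement comes from (no reflection is involved). So you correctly flagged the Lipschitz bound as the key tool, but the way it is deployed --- comparing $x_4$ to a convenient auxiliary point rather than estimating the integral at $x_4$ itself --- is the idea you are missing.
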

\begin{proof}
We first note that the function $H_T$ is analytic on $\mathbb{R}\backslash \overline{B_T}$
and $H_T'(z)>0$ for $z\in\mathbb{R}\backslash \overline{B_T}$.
We concentrate on finding a lower bound
of $x_4$.

As it was pointed out in \cite{BCN1}, when $\mu=\mu_a$ the 
measure $\rho$ is a compactly supported purely atomic positive measure on $\mathbb{R}$.
We also know that $supp(\rho)\subset B_T$. 
Therefore the function $H_T$ is a rational function.
From the definition of $H_T$
and $B_T$,
it is not hard to see that $x_4$ is the largest in solution to the equation
\[ 
  H'_T(z)=0, z\in\mathbb{R}. 
\]
We have
\[
 H_T'(z)=0\Leftrightarrow \frac{1}{T-1}=\int_{\mathbb{R}}\frac{1}{(\xi-z)^2}\rho(d\xi),
\]
which implies that
\[
  x_4\geq \sqrt{(T-1)\rho(\mathbb{R})}=\sigma(a)\sqrt{T-1}.
\]

We know that $L+\sigma(a)\sqrt{T-1}>x_4$. 
We have
\[
\begin{split}
 x_3 = H_T(x_4),
\end{split}
\]
and 
\[
 \begin{split}
   2L&  \geq 2(L+\sigma(a)\sqrt{T-1}-x_4)\\
      &\geq H_T(L+\sigma(a)\sqrt{T-1})-x_3,
 \end{split}
\]
thanks to Proposition 5.8.
We have
\[
   \begin{split}
    &H_T(L+\sigma(a)\sqrt{T-1})\\
      &\geq  L+\sigma(a)\sqrt{T-1}+(T-1)\left( \tau(a)+\frac{\sigma(a)^2}{L+\sigma(a)\sqrt{T-1}}\right)\\
      &\geq L+\left(1+\epsilon(a,t)\right)\sigma(a)\sqrt{T-1}+(T-1)\tau(a),
 \end{split}
\]
where \[\epsilon(a,t)=\frac{\sigma(a)\sqrt{T-1}}{L+\sigma(a)\sqrt{T-1}},\] for for all $T>1$. 
Finally, we obtain
\[
  x_3\geq -L+ \left(1+\epsilon(a,t)\right)\sigma(a)\sqrt{T-1}
     +(T-1)\tau(a).\qedhere
\]
\end{proof}

\begin{theorem}\label{thm:lower-bound}
Given $0\leq a\in\mathcal{A}$. Assume that $||a||\leq L$, for $0<t\leq 1$, we have
\[
||a||_{(t)}\geq \tau(a)+\left(1+\epsilon(a,1/t)\right)\sigma(a)\sqrt{t(1-t)}-t(L+\tau(a)),
\]
where 
\[\epsilon(a,t)=\frac{\sigma(a)\sqrt{t-1}}{L+\sigma(a)\sqrt{t-1}}.\] 
\end{theorem}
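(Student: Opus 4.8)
The plan is to translate the geometric estimate on $x_3$ (the right endpoint of $A_T$) from the preceding Proposition into a lower bound on $\|a\|_{(t)}$ via the Nica--Speicher identification. Recall from Theorem \ref{thm:nica-speicher} that $\mu_{t^{-1}p_tap_t} = \mu_a^{\boxplus 1/t}$, so $\|a\|_{(t)} = \|p_tap_t\|_\infty = t\,[\mu_a^{\boxplus 1/t}]$, and in particular $\|a\|_{(t)}$ is at least $t$ times the supremum of $\mathrm{supp}(\mu_a^{\boxplus 1/t})$. Since $A_T$ was identified with $\{x : \Im F_{\mu^{\boxplus T}}(x) > 0\}$, the absolutely continuous part of $\mu^{\boxplus T}$ is supported on the closure of $A_T$ (by the Stieltjes--Perron inversion formula, Proposition \ref{prop:SP}), and hence the right edge of $\mathrm{supp}(\mu^{\boxplus T})$ is at least $x_3$. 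Therefore $\|a\|_{(t)} \geq t\, x_3$ with $T = 1/t$.

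First I would set $T = 1/t$ and substitute the bound $x_3 \geq -L + (1+\epsilon(a,t))\sigma(a)\sqrt{T-1} + (T-1)\tau(a)$ from the previous Proposition. Multiplying through by $t$ gives
\[
\|a\|_{(t)} \geq t\,x_3 \geq -tL + (1+\epsilon(a,1/t))\,\sigma(a)\,t\sqrt{1/t - 1} + t\,(1/t - 1)\,\tau(a).
\]
Then I would simplify the two $t$-dependent factors: $t\sqrt{1/t-1} = \sqrt{t^2(1-t)/t} = \sqrt{t(1-t)}$, and $t(1/t-1) = 1-t$, so the right-hand side becomes $-tL + (1+\epsilon(a,1/t))\sigma(a)\sqrt{t(1-t)} + (1-t)\tau(a)$. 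Rewriting $(1-t)\tau(a) = \tau(a) - t\tau(a)$ and grouping the two terms proportional to $t$ yields exactly $\tau(a) + (1+\epsilon(a,1/t))\sigma(a)\sqrt{t(1-t)} - t(L+\tau(a))$, which is the claimed inequality. The displayed formula for $\epsilon$ in the statement is just the previous Proposition's $\epsilon(a,t)$ with the argument renamed, since there $\epsilon(a,t) = \sigma(a)\sqrt{T-1}/(L+\sigma(a)\sqrt{T-1})$ with $T$ playing the role of the second slot.

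One point that needs a small amount of care: the previous Proposition was stated for $T > 1$, i.e. $t < 1$, so the case $t = 1$ should be handled separately. When $t=1$ we have $p_t = \mathrm{Id}$, so $\|a\|_{(1)} = \|a\|_\infty$, while the right-hand side reduces to $\tau(a) - (L+\tau(a)) + 0 = -L \leq 0 \leq \|a\|_\infty$, so the inequality holds trivially there; similarly as $t \to 0$ both sides tend to $0$ at the same rate, which is consistent. The only genuine step that requires justification beyond bookkeeping is the claim that the support of $\mu^{\boxplus 1/t}$ extends at least to $x_3$ — that is, that the absolutely continuous part is nonempty with right edge $x_3$ rather than the measure sitting entirely to the left of $x_3$ with an isolated atom. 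This follows because $x_3 = \sup A_T$ is by definition a point where (a left neighborhood has) $\Im F_{\mu^{\boxplus T}} > 0$, hence positive density by Proposition \ref{prop:SP}, so points arbitrarily close to $x_3$ from the left lie in $\mathrm{supp}(\mu^{\boxplus T})$ and closedness of the support gives $x_3 \in \mathrm{supp}(\mu^{\boxplus T})$. I expect this verification, together with confirming $x_4 < \infty$ so that $x_3 = H_T(x_4)$ is well-defined, to be the main (though still mild) obstacle; the rest is the elementary algebraic substitution above.
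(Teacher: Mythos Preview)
Your proposal is correct and follows exactly the route the paper intends: Theorem \ref{thm:lower-bound} is stated in the paper immediately after the Proposition bounding $x_3$ from below, with no separate proof, so the implicit argument is precisely the one you wrote out---use $\|a\|_{(t)} = t\,[\mu_a^{\boxplus 1/t}] \geq t\,x_3$ (for $a\geq 0$ the bracket equals the right edge of the support, which dominates $x_3 = \sup A_T$), substitute the Proposition's bound with $T=1/t$, and simplify $t\sqrt{T-1}=\sqrt{t(1-t)}$ and $t(T-1)=1-t$. Your extra care with the $t=1$ endpoint and with checking that $x_3$ genuinely lies in the support via Stieltjes--Perron is more than the paper provides and is entirely appropriate.
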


\section{Violation of the MOE additivity}\label{sec:violation}

\subsection{Non additivity for the MOE: strategy}\label{sec:strategy}

As announced in the introduction, we revisit the proof that the minimum output entropy is not additive. 
Let us first review the strategy of proof, that relies on the use of sequences of random random channels. 

The construction of counterexamples is based on the \emph{Stinespring dilation theorem} \cite{Stinespring1955}, which asserts that any completely 
positive, trace preserving map $\Phi : M_d (\mathbb C) \to M_k(\mathbb C)$ can be realized as
\begin{equation}\label{eq:channel-from-isometry}
\Phi(X) = [\mathrm{id}_k \otimes \mathrm{Tr_n}](WXW^*), 
\end{equation}
where $M_d(\mathbb C)$ is the $d\times d$ complex matrix algebra, and
 $n$ is an integer (called the dimension of the environment) so that
\begin{equation}
W : \mathbb C^d \to \mathbb C^k \otimes \mathbb C^n
\end{equation}
is an isometry; $W^*W = I_d$. 
Here, we set $d=tkn$ for $t \in (0,1]$.
Conversely, any isometry $W$ gives rise to a quantum channel.
Importantly, if we take random isometry $W$ with respect to the Haar measure of $\mathcal U(kn)$,
$WW^* \in M_{kn}(\mathbb C)$ is a random projection of rank $d$.

In \eqref{eq:channel-from-isometry}, replacing $\mathrm{id}_k \otimes \mathrm{Tr_n}$ by $\mathrm{Tr_k} \otimes \mathrm{id}_n$
gives the complementary channel. Output states of those two channels share the same non-zero eigenvalues 
when the input matrix $X$ is a rank-one projection. 
In this paper, we look for minimum values of the entropy function, which is concave,
therefore  we can assume that $X$ is always a rank-one projection. 
Hence, in particular, we can take $k \leq n$ without loss of generality.

Given $\Phi$, we define its complex conjugate $\overline \Phi$ as 
\begin{equation}
\overline\Phi(X) = [\mathrm{id} \otimes \mathrm{Tr}](\overline WXW^t),
\end{equation}
where $\overline W$ is the entry-wise conjugate of $W$ and the superscript $t$ stands for transpose. 
The following crucial but simple lemma justifies the choice of a pair $(\Phi,\overline \Phi)$ in order to obtain violation.
It is due to Hayden and Winter \cite{HaydenWinter2008}. 
\begin{lemma} For any quantum channel defined in \eqref{eq:channel-from-isometry},
$$||\Phi\otimes \overline \Phi (B)||_{\infty}\geq t$$
where $B$ stands for a Bell state and $t=\frac d {kn}$. 
\end{lemma}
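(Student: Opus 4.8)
The plan is to compute the output $\Phi \otimes \overline\Phi (B)$ explicitly when $B$ is the maximally entangled (Bell) state on $\mathbb{C}^d \otimes \mathbb{C}^d$, and to extract a single large eigenvalue by testing the output against a well-chosen vector. Write $W : \mathbb{C}^d \to \mathbb{C}^k \otimes \mathbb{C}^n$ for the Stinespring isometry of $\Phi$, so that $\overline W$ is the isometry of $\overline\Phi$, and let $B = |\Omega\rangle\langle\Omega|$ with $|\Omega\rangle = \frac{1}{\sqrt d}\sum_{i=1}^d |i\rangle \otimes |i\rangle \in \mathbb{C}^d \otimes \mathbb{C}^d$. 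First I would form the unnormalized pure state $(W \otimes \overline W)|\Omega\rangle \in (\mathbb{C}^k \otimes \mathbb{C}^n) \otimes (\mathbb{C}^k \otimes \mathbb{C}^n)$, reorder the tensor legs so that the two $\mathbb{C}^k$ environments-or-outputs are grouped appropriately for $\mathrm{id}_k \otimes \mathrm{Tr}_n$ acting on each copy, and observe that $\Phi \otimes \overline\Phi(B)$ is obtained by tracing out the two $\mathbb{C}^n$ factors.

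The key computational step is the standard ``transpose trick'': for any operator $W$, $(W \otimes \overline W)|\Omega\rangle$ is, up to the factor $1/\sqrt d$, the vectorization of $W W^*$ reshaped suitably, and pairing it with the maximally entangled vector $|\Omega_k\rangle \in \mathbb{C}^k \otimes \mathbb{C}^k$ on the two output copies produces an inner product that collapses to $\frac{1}{d}\mathrm{Tr}\big[(W W^*)(\cdot)\big]$-type expressions. Concretely, I would show that
\[
\langle \Omega_k \otimes \psi \,|\, \Phi \otimes \overline\Phi(B) \,|\, \Omega_k \otimes \psi \rangle \;\geq\; \frac{1}{d} \sum_{\alpha} |\langle \text{something}\rangle|^2
\]
for an appropriate choice of $\psi$ in the two $\mathbb{C}^n$ outputs after one rearranges which factors are traced; the point is that one term in the resulting sum reproduces $\|W^*W\|^2/d = \mathrm{Tr}(I_d)/d \cdot (1/d) \cdot \dots$, and since $W^*W = I_d$ with $d = tkn$ one lands on the lower bound $t = d/(kn)$ for $\|\Phi\otimes\overline\Phi(B)\|_\infty$. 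Because $\|\Phi\otimes\overline\Phi(B)\|_\infty$ is the maximal eigenvalue and we have exhibited a unit vector whose Rayleigh quotient is at least $t$, this finishes the argument.

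The main obstacle is purely bookkeeping: keeping straight the four tensor legs (two $\mathbb{C}^k$'s and two $\mathbb{C}^n$'s), which pair gets traced out by each channel, and which maximally entangled vector lives on which pair, so that the ``transpose trick'' genuinely applies with $W$ against $\overline W$ rather than against $W$ itself. The identity $W^* W = I_d$ is what makes the surviving term equal exactly $t$ rather than something smaller, so I would be careful that the vector I test against isolates precisely that term (the diagonal contribution), all other contributions being nonnegative. Once the index conventions are fixed, the estimate is a one-line consequence of $\langle \Omega_d | (W^*W \otimes \overline{W^*W}) | \Omega_d\rangle = \frac{1}{d}\mathrm{Tr}(W^*W (W^*W)^t)^{\!*}$-style manipulations combined with positivity of the discarded cross terms.
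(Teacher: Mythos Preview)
The paper does not supply its own proof of this lemma; it simply attributes the result to Hayden and Winter and moves on. So there is nothing to compare against, and your outline is in fact the standard Hayden--Winter argument: test the output state against the maximally entangled vector on the two $\mathbb{C}^k$ output copies and bound the Rayleigh quotient from below.

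That said, your write-up contains a genuine confusion about the tensor legs that you should fix before trusting the computation. The state $\Phi\otimes\overline\Phi(B)$ lives on $\mathbb{C}^k\otimes\mathbb{C}^k$ \emph{only}; the two $\mathbb{C}^n$ factors are precisely what have been traced out, so there is no auxiliary vector $\psi$ to choose. The test vector is just $|\Omega_k\rangle=\frac{1}{\sqrt{k}}\sum_a|a\rangle|a\rangle$. Unraveling the partial traces gives
\[
\langle\Omega_k|\,\Phi\otimes\overline\Phi(B)\,|\Omega_k\rangle
=\frac{1}{kd}\,\bigl\|\mathrm{Tr}_k(WW^*)\bigr\|_2^2 .
\]
Writing $Q:=\mathrm{Tr}_k(WW^*)\in M_n(\mathbb{C})$, one has $Q\geq 0$ and $\mathrm{Tr}\,Q=\mathrm{Tr}(WW^*)=d$ (this is where $W^*W=I_d$ enters). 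Cauchy--Schwarz then yields $\|Q\|_2^2\geq(\mathrm{Tr}\,Q)^2/n=d^2/n$, hence the Rayleigh quotient is at least $d/(kn)=t$. Note that the inequality step is Cauchy--Schwarz (equivalently, the decomposition $\|Q\|_2^2=d^2/n+\|Q-(d/n)I_n\|_2^2$), not ``positivity of discarded cross terms'' in the sense you suggest; the phrase is harmless but may mislead you when you actually write out the indices.
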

This lemma is used as follows: if the operator norm of an output is large, then its entropy is somewhat small, and therefore
we have an upper bound on the MOE of $\Phi\otimes \overline \Phi$. This upper bound will turn out
to be sufficient for appropriate choices of channels although a proof can be written in several lines.  
This is what we describe in the following sections.
We close the current section by showing the upper bound mentioned above:
\begin{theorem}\label{thm:product-bound}
For any quantum channel defined in \eqref{eq:channel-from-isometry} with $t \geq k^{-2}$
\[
H^{\min} (\Phi\otimes \overline \Phi )  \leq 2(1-t) \log(k)+h(t). 
\]
Here, $h(t)=-t\log t - (1-t)\log(1-t)$ is the binary entropy. 
\end{theorem}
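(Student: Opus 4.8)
The plan is to exhibit a single good input state for the product channel $\Phi \otimes \overline\Phi$ and estimate the entropy of the corresponding output. The natural candidate, suggested by the preceding lemma of Hayden and Winter, is the maximally entangled (Bell) state $B$ on $\mathbb C^d \otimes \mathbb C^d$. By definition of $H^{\min}$ we have $H^{\min}(\Phi \otimes \overline\Phi) \le H(\Phi \otimes \overline\Phi(B))$, so it suffices to bound $H(\Phi \otimes \overline\Phi(B))$ from above by $2(1-t)\log k + h(t)$.

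The key input is the lemma stating $\|\Phi \otimes \overline\Phi(B)\|_\infty \ge t$. Write $\lambda_1 \ge \lambda_2 \ge \cdots$ for the eigenvalues of the output state $\Phi \otimes \overline\Phi(B) \in M_{k^2}(\mathbb C)$; these form a probability vector, and $\lambda_1 \ge t$. First I would split the entropy as follows: with $p := \lambda_1 \ge t$, grouping the top eigenvalue against the rest gives the standard decomposition
\[
H(\Phi \otimes \overline\Phi(B)) = h(p) + (1-p)\, H\!\left(\frac{(\lambda_2,\lambda_3,\ldots)}{1-p}\right).
\]
The renormalized tail $(\lambda_2,\lambda_3,\ldots)/(1-p)$ is a probability vector supported on at most $k^2 - 1 < k^2$ coordinates, so its entropy is at most $\log(k^2) = 2\log k$. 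Hence $H(\Phi \otimes \overline\Phi(B)) \le h(p) + (1-p)\cdot 2\log k$. Since $p \ge t$ and $p \mapsto h(p) + 2(1-p)\log k$ is decreasing on the relevant range (for $k$ large enough that $2\log k \ge \log\frac{p}{1-p}$, which is where the hypothesis $t \ge k^{-2}$ enters — it guarantees $t$, hence $p$, is not too small relative to $k$), we may replace $p$ by $t$ to obtain $H(\Phi \otimes \overline\Phi(B)) \le h(t) + 2(1-t)\log k$, as desired.

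The main technical point to be careful about is the monotonicity step: one must check that the function $g(p) = h(p) + 2(1-p)\log k$ is indeed nonincreasing for $p \in [t, 1]$, i.e. that $g'(p) = \log\frac{1-p}{p} - 2\log k \le 0$, equivalently $\frac{1-p}{p} \le k^2$, equivalently $p \ge \frac{1}{k^2+1}$. This is implied by $p \ge t \ge k^{-2} \ge \frac{1}{k^2+1}$, so the hypothesis $t \ge k^{-2}$ is exactly what makes the argument go through; this is the one place where the constraint is used, and it is the only real obstacle — everything else is the elementary entropy decomposition plus the dimension bound $k^2$ on the support of the output. I would also remark that since $\Phi$ and $\overline\Phi$ have the same output entropy statistics, no separate treatment of the conjugate channel is needed beyond invoking the Hayden–Winter lemma.
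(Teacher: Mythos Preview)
Your argument is correct and is essentially the same as the paper's: both feed the Bell state $B$ to $\Phi\otimes\overline\Phi$, use the Hayden--Winter bound $\lambda_1\ge t$, and then maximize the entropy of a $k^2$-dimensional state subject to that constraint. The paper states directly that the maximizer is $(t,\tfrac{1-t}{k^2-1},\ldots,\tfrac{1-t}{k^2-1})$, yielding $h(t)+(1-t)\log(k^2-1)\le h(t)+2(1-t)\log k$; you reach the same bound via the decomposition $H=h(p)+(1-p)H(\text{tail})$ together with the monotonicity of $p\mapsto h(p)+2(1-p)\log k$ on $[t,1]$. Your version has the merit of making explicit where the hypothesis $t\ge k^{-2}$ is used (namely to ensure $p\ge t\ge \tfrac{1}{k^2+1}$ so that the monotonicity holds), a point the paper leaves implicit.
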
 
\begin{proof}
Since the size of the matrix  $\Phi\otimes \overline \Phi (B)$ is $k^2$, 
the highest entropy with the largest eigenvalue being $t$ is given when
the other $k^2-1$ eigenvalues are all $\frac{1-t}{k^2-1}$.
This gives an upper bound;
\[
H(\Phi\otimes \overline \Phi (B)) \leq -t \log t - (1-t) \left[\log (1-t) - \log (k^2-1)\right].
\]
\end{proof}

\subsection{Random model} \label{sec:model}

Now we fix a number $t\in (0,1)$ and $k$, and define a sequence of random quantum channels in $n\in\mathbb N$ as follows.
First, we define a random isometry:
\[
V_n: \mathbb C^{tkn} \to \mathbb C^k \otimes \mathbb C^n
\]
 by taking the first $\lfloor tkn \rfloor$ columns of unitary matrices in $\mathcal U(kn)$ with the Haar measure.
Then we define a random quantum channel 
\begin{equation}\label{eq:RQ}
\Phi_n (X) = \Tr_{\mathbb C^n} [V_n X V_n^*].
\end{equation}

The collection of eigenvalues of $\Phi_n(X)$ 
is denoted by $K_{k,n,t}$
and is a random subset of the
the simplex $\Delta_k$.

Now, we introduce the convex body $K_{k,t}\subset \Delta_k$ as follows:
\begin{equation}\label{eq:convex}
        K_{k,t}:=\{ \lambda \in\Delta_{k} \; |\; \forall a\in\Delta_{k} , \langle \lambda ,a \rangle \leq || a ||_{(t)}
        \},
\end{equation}
where $\langle\cdot , \cdot \rangle$ denotes the canonical scalar product in $\R^k$.
The behaviour of the limiting convex set was proven in \cite{BCN1} and is recalled below:
\begin{theorem}\label{thm:main-bcn1}
Almost surely, the following statements hold true:
\begin{itemize}
\item Let $\mathcal{O}$ be an open set in $\Delta_{k}$ containing $K_{k,t}$.
Then, for $n$ large enough, $K_{n,k,t}\subset \mathcal{O}$.
\item Let $\mathcal K$ be a compact set in the interior of $K_{k,t}$.
Then, for $n$ large enough, $\mathcal K \subset K_{n,k,t}$.
\end{itemize}
\end{theorem}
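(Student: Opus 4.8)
The plan is to trade the convex bodies for their support functions and to prove almost-sure convergence of the latter. Recall that a compact convex set $C\subset\R^k$ is recovered from $h_C(a):=\sup_{\lambda\in C}\langle\lambda,a\rangle$, and that the two inclusions in the statement together amount to Hausdorff convergence $\overline{\mathrm{conv}}\,K_{n,k,t}\to K_{k,t}$. First I would record that $h_{K_{k,t}}=\|\cdot\|_{(t)}$ on the relevant directions: from \eqref{eq:convex} one has $h_{K_{k,t}}(a)\le\|a\|_{(t)}$ immediately, while the reverse inequality holds because $\|\cdot\|_{(t)}$ is a norm (this is the content of \cite{BCN1}), so that $K_{k,t}$ is precisely the body polar to the unit ball of the dual norm. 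It then suffices to show that, almost surely, $h_{K_{n,k,t}}(a)\to\|a\|_{(t)}$ for every $a$ in a fixed countable dense set of directions in $\Delta_k$: since all the $K_{n,k,t}$ sit inside the bounded set $\Delta_k$, their support functions are uniformly Lipschitz, and pointwise convergence on a dense set self-improves to uniform convergence, i.e.\ to the desired Hausdorff convergence, off a single null set.

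Second, I would identify the random support function with a compression norm. Put $P_n=V_nV_n^*\in M_{kn}(\C)$, a Haar-distributed orthogonal projection of rank $\lfloor tkn\rfloor$. For a rank-one input $X=|\psi\rangle\langle\psi|$, the rearrangement (von Neumann trace) inequality gives $\langle \mathrm{eig}(\Phi_n(X)),a\rangle\le\max_{U\in\mathcal U(k)}\langle\psi,V_n^*(U\,\mathrm{diag}(a)\,U^*\otimes I_n)V_n\,\psi\rangle$; maximizing over $\psi$ and $U$, and using unitary invariance of the Haar measure together with a net over the \emph{fixed}-dimensional group $\mathcal U(k)$ (which costs only polynomially in the precision), one gets
\[
h_{K_{n,k,t}}(a)=\big\|P_n\,(\mathrm{diag}(a)\otimes I_n)\,P_n\big\|_\infty .
\]
Now $\mathrm{diag}(a)\otimes I_n$ has spectral distribution $\mu_a$ for every $n$, and by Voiculescu's theorem $(P_n,\mathrm{diag}(a)\otimes I_n)$ is asymptotically free, with $P_n$ converging to a projection $p_t$ of trace $t$ free from $a$. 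The input I need is the \emph{strong} version, $\|P_n(\mathrm{diag}(a)\otimes I_n)P_n\|_\infty\to\|p_tap_t\|_\infty=\|a\|_{(t)}$ almost surely; this follows either from strong asymptotic freeness of Haar unitaries and deterministic matrices, or, more directly for this special configuration, from an $\epsilon$-net argument on the random range $\mathrm{ran}\,P_n$ combined with Gaussian-type concentration of $\langle v,(\mathrm{diag}(a)\otimes I_n)v\rangle$ for a fixed random unit vector $v$, the limit being identified from the free moments. Together with the first step this already yields the first bullet: if $\mathcal O\supset K_{k,t}$ is open, then almost surely $K_{n,k,t}\subset\overline{\mathrm{conv}}\,K_{n,k,t}\subset K_{k,t}+\epsilon B\subset\mathcal O$ for $n$ large.

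The main obstacle is the second bullet, the reverse inclusion $\mathcal K\subset K_{n,k,t}$ for $\mathcal K$ compact in the interior of $K_{k,t}$, because $K_{n,k,t}$ — the set of eigenvalue vectors of $\{\Phi_n(|\psi\rangle\langle\psi|)\}$ — need not be convex, so the support-function argument only controls its convex hull. Here I would argue realizability directly. Fix a target $\lambda_0$ in the interior and set $\rho_0=\sum_i\lambda_{0,i}|i\rangle\langle i|$; a unit vector $v\in\mathrm{ran}\,P_n$ produces reduced state exactly $\rho_0$ precisely when $v$ lies on the fiber $N_{\lambda_0}=\{\sum_i\sqrt{\lambda_{0,i}}\,|i\rangle\otimes|g_i\rangle:\{g_i\}\ \text{orthonormal in}\ \C^n\}$, a manifold of real dimension $\approx 2kn-k^2$, whereas $\mathrm{ran}\,P_n$ has real dimension $2tkn$; since these add up to more than $2kn$ for $n$ large, the dimension count does not obstruct a random range from meeting $N_{\lambda_0}$, and the heart of the matter is the \emph{quantitative} statement that this meeting occurs with probability tending to $1$ exactly when $\lambda_0$ is a ``typical'' point, a property governed by the free convolution $\mu_a^{\boxplus 1/t}$ — equivalently, when $\lambda_0$ lies in the interior of $K_{k,t}$. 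Granting this, I would cover $\mathcal K$ by finitely many small balls, apply a robust version at each center (a small perturbation of an achieved $\lambda_0$ stays achievable by perturbing the intersection point on $N_{\lambda_0}$), and conclude by a union bound, once more off a single null set.
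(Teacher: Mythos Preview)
First, note that the present paper does not itself prove Theorem~\ref{thm:main-bcn1}: it is quoted from \cite{BCN1} (``The behaviour of the limiting convex set was proven in \cite{BCN1} and is recalled below''). So the benchmark against which your proposal should be measured is the argument in \cite{BCN1}, not anything in this paper.

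For the first bullet your outline is essentially the \cite{BCN1} argument and is correct in spirit: reduce to support functions, identify $h_{K_{n,k,t}}(a)$ with the compression norm $\|P_n(\cdot)P_n\|_\infty$, and invoke strong convergence $\|P_n(\mathrm{diag}(a)\otimes I_n)P_n\|_\infty\to\|p_tap_t\|_\infty=\|a\|_{(t)}$. One caveat: the displayed identity you write is not literally true for a fixed realization, since the von Neumann inequality gives $h_{K_{n,k,t}}(a)=\sup_{U\in\mathcal U(k)}\|P_n(U\mathrm{diag}(a)U^*\otimes I_n)P_n\|_\infty$, and only after the Haar-invariance/net argument you sketch afterwards do these coincide \emph{in the limit}. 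That step is fine, but the equality sign should be a $\sup_U$.

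The second bullet, however, has a genuine gap. You write ``the heart of the matter is the quantitative statement that this meeting occurs with probability tending to $1$ exactly when $\lambda_0$ lies in the interior of $K_{k,t}$. Granting this, \ldots''. But this is precisely the content of the inner inclusion, and your dimension count provides no evidence for it: the count $(2kn-k^2)+2tkn\ge 2kn$ holds for \emph{every} $\lambda_0\in\Delta_k$ with strictly positive entries, whether or not $\lambda_0\in K_{k,t}$, so it cannot be what singles out the interior of $K_{k,t}$ as the achievable region. Transversality of a fixed submanifold with a Haar-random subspace is a much more delicate large-deviations question, and the threshold is governed by an entropy/free-probability quantity, not by raw dimension. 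In \cite{BCN1} the inner inclusion is not obtained this way; rather, one produces explicit vectors in $\mathrm{ran}\,P_n$ realizing points near the \emph{boundary} of $K_{k,t}$ --- essentially the (approximate) top eigenvectors of $P_n(\mathrm{diag}(a)\otimes I_n)P_n$ for suitably chosen directions $a$, whose Schmidt coefficients are shown to converge to the exposed points of $K_{k,t}$ --- and then uses the structure of the image set (it is the continuous image of a sphere, hence connected, and in fact contains the trace point $(1/k,\dots,1/k)$) to fill in the interior. Your proposal would need either that argument or a genuine proof of the ``meeting'' claim; as written, the second half is a heuristic rather than a proof.
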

This result was re-investigated in \cite{CFN3} in $M_k (\mathbb C)$ with Euclidean norm,
which is used below.

\subsection{Concentration around the maximally mixed state} \label{sec:concentration}

When $k \ll n$, it was proved explicitly   
in \cite{BrandaoHorodecki2010,AubrunSzarekWerner2011,Fukuda2014} 
by using measure concentration and large deviation that
\[
\{\Phi_n (X) : X \in S_{d}\}
\]
concentrates around the maximally mixed state $\tilde I = I_k /k$,
bounding the following quantity:
\[
\max_{X \in S_{d}}\| \Phi_n (X) - \tilde I \|_2.
\]

Now, we do the same by using Theorem  \ref{thm:upper-bound}, which was proven via free probability. 
Indeed, Theorem \ref{thm:main-bcn1} and \eqref{eq:convex} imply that
with probability one for the sequence of random channels $\{\Phi_n\}_{n \in \mathbb N}$ we have
$\forall \epsilon >0, \exists n_\epsilon \in \mathbb N, \forall n \geq n_\epsilon, \forall X \in S_n$ 
\begin{equation}\label{eq:trace-trick1}
\Tr (\Phi_n (X) - \tilde I)^2 = \Tr \left[\Phi_n(X) (\Phi_n(X) - \tilde I)\right] \leq  ||\Phi_n (X )-\tilde I||_{(t)}\cdot(1+\epsilon).
\end{equation}
Moreover, ignoring contributions of atomic parts in Theorem \ref{thm:upper-bound} we have
\begin{align}\label{eq:trace-trick2}
 ||\Phi_n (X )-\tilde I||_{(t)} &\leq  t\left[||\Phi_n(X) - \tilde I ||+2\sigma (\Phi_n(X) - \tilde I)\sqrt{1/t-1}\right] \notag\\
 & \leq t\left[||\Phi_n(X) - \tilde I ||_2+2 \sqrt{1/k \cdot  ||\Phi_n(X) - \tilde I ||_2^2}   \sqrt{1/t-1}\right]  \notag\\
 & =t||\Phi_n (X )-\tilde I ||_2 \left[1+2\sqrt{\frac{1-t}{tk}}\right].
\end{align}
Here, notice that $\tau (\Phi(X) - \tilde I)=0$ and that the operator norm is bounded above by the non-normalized $L^2$ norm.
Also, remember that $\sigma(X)^2=k^{-1}\|X\|_2^2$ when $\Tr X =0$.

Hence by using \eqref{eq:trace-trick1} and \eqref{eq:trace-trick2} we get our desired result:
almost surely, for the sequence of random channels $\{\Phi_n\}_{n \in \mathbb N}$ we have
$\forall \epsilon >0, \exists n_\epsilon \in \mathbb N, \forall n \geq n_\epsilon, \forall X \in S_n$ 
\[
||\Phi_n (X )-\tilde I ||_2 \leq t\left[1+2\sqrt{\frac{1-t}{tk}}\right]\cdot( 1+ \epsilon).
\]
So far, we have not taken account of contributions of atoms to $(t)$-norm. 
However, 
we can assume without loss of generality that all the eigenvalues of $\Phi (X )-\tilde I $ are different
otherwise we can perturb them a little to avoid having multiplicity of eigenvalues. 
Hence, according to Theorem  \ref{thm:upper-bound}, the above bound holds for $t  \leq 1- \frac 1 k$.

Summarizing, we obtain the following theorem

\begin{theorem} \label{thm:bound2}
For $0< t \leq 1- \frac 1k$,
the following bounds hold true with probability $1$.
\[
\lim_{n \to \infty} \max_{X \in S_n} ||\Phi_n (X)-\tilde I ||_2 \leq t \left[1+2\sqrt{\frac{1-t}{tk}}\right].
\]
\end{theorem}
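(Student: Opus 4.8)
The plan is to read the bound off the two inequalities \eqref{eq:trace-trick1} and \eqref{eq:trace-trick2} established above and then to pass to the limit in $n$ and in $\epsilon$. Both of those inequalities hold, almost surely and uniformly in $X$, for all large $n$: the first is the content of Theorem~\ref{thm:main-bcn1} together with the polar description \eqref{eq:convex} of $K_{k,t}$, and the second is an application of the upper bound Theorem~\ref{thm:upper-bound}(2) to the traceless Hermitian matrix $a=\Phi_n(X)-\tilde I$. So the skeleton is: fix $\epsilon>0$; produce an almost sure event and an index $n_\epsilon$ such that for all $n\ge n_\epsilon$ and all $X\in S_n$,
\[
\|\Phi_n(X)-\tilde I\|_2^2 \le (1+\epsilon)\|\Phi_n(X)-\tilde I\|_{(t)} \le (1+\epsilon)\, t\,\|\Phi_n(X)-\tilde I\|_2\Big[1+2\sqrt{\tfrac{1-t}{tk}}\Big];
\]
divide by $\|\Phi_n(X)-\tilde I\|_2$ (the vanishing case being trivial) to obtain $\|\Phi_n(X)-\tilde I\|_2\le (1+\epsilon)\, t\big[1+2\sqrt{\tfrac{1-t}{tk}}\big]$; then take the maximum over $X\in S_n$, then $\limsup$ in $n$, then let $\epsilon\to0$.

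Carrying this out, the first inequality in the display is \eqref{eq:trace-trick1}: one uses $\Tr(\Phi_n(X)-\tilde I)=0$ to drop the $\tilde I$-term and write $\Tr(\Phi_n(X)-\tilde I)^2=\Tr[\Phi_n(X)(\Phi_n(X)-\tilde I)]$, and then the fact that for $n$ large the eigenvalue vector of $\Phi_n(X)$ lies in an open neighbourhood of $K_{k,t}$ on which $\langle\cdot,a\rangle\le(1+\epsilon)\|a\|_{(t)}$ — the almost sure, uniform-in-$X$ statement being exactly Theorem~\ref{thm:main-bcn1}. The second inequality is \eqref{eq:trace-trick2}: Theorem~\ref{thm:upper-bound}(2) applied to $a=\Phi_n(X)-\tilde I$ with $\tau(a)=0$ gives $\|a\|_{(t)}\le t\|a\|_\infty+2\sigma(a)\sqrt{t(1-t)}$, after which one bounds the operator norm by the un-normalised Hilbert--Schmidt norm, $\|a\|_\infty\le\|a\|_2$, and uses $\sigma(a)^2=k^{-1}\|a\|_2^2$ (valid because $\Tr a=0$). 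For the final step one should be a little careful that the almost sure event may a priori depend on $\epsilon$: apply the above with $\epsilon=1/m$, $m\in\mathbb N$, and intersect the countably many almost sure events; on the resulting almost sure event one gets $\limsup_{n}\max_{X}\|\Phi_n(X)-\tilde I\|_2\le (1+1/m)\, t[1+2\sqrt{(1-t)/(tk)}]$ for every $m$, hence the claimed bound.

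The one genuinely delicate point — and the reason Theorem~\ref{thm:upper-bound}(2) is invoked, with its hypothesis that the spectrum of $a$ be simple — is that $\Phi_n(X)-\tilde I$ may well have repeated eigenvalues (for instance near the maximally mixed state), so that $\mu_a^{\boxplus 1/t}$ could carry an atomic part and the clean bound would not directly apply. The remedy is a perturbation argument: the traceless diagonal matrices with pairwise distinct spectrum are dense, and for $n$ large the eigenvalue vector of $\Phi_n(X)$ still lies in the open $\epsilon$-enlarged copy of $K_{k,t}$ on which the inequality of the previous paragraph holds, so one replaces $a$ by a nearby traceless Hermitian matrix $a'$ with simple spectrum, applies $\|a'\|_{(t)}\le t\|a'\|_\infty+2\sigma(a')\sqrt{t(1-t)}$ together with the trace identity to $a'$, and lets $a'\to a$, invoking continuity of $\|\cdot\|_2$, $\|\cdot\|_{(t)}$ and $\langle\cdot,\cdot\rangle$. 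For $t\le 1-\tfrac1k$ this perturbation is always available — this is where the range of $t$ in the statement comes from — and it is precisely this that justifies ``ignoring contributions of atomic parts'' above. Everything else is bookkeeping.
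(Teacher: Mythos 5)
Your proposal is correct and follows essentially the same route as the paper: combining \eqref{eq:trace-trick1} (via Theorem~\ref{thm:main-bcn1} and the polar description~\eqref{eq:convex}) with \eqref{eq:trace-trick2} (via Theorem~\ref{thm:upper-bound}(2) applied to the traceless matrix $\Phi_n(X)-\tilde I$), dividing through by $\|\Phi_n(X)-\tilde I\|_2$, and passing to the limit, with the simple-spectrum perturbation covering the atomic case and fixing the range $t\le 1-\tfrac1k$. The added care about intersecting countably many almost-sure events and about the continuity limit in the perturbation step sharpens the write-up but does not change the argument.
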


\subsection{Violation}\label{sec:violation2}
First, to prove violation of additivity for $H^{\min}$, we quote the following bound:
\[
\log k - H(X)  \leq k \cdot \Tr (X-\tilde I)^2 
\]
for $X \in S_k (\mathbb C)$.
This bound was used  in \cite{Hastings2009},
which can be proven by using concavity of $\log$.
Then, we apply the above bound to Theorem \ref{thm:bound2}
to show the following bounds hold true with probability $1$ 
for $t \leq 1- \frac 1k$. 
\begin{align}
\lim_{n \to \infty} H^{\min} \left(\Phi_n \right) \geq \log k -  t^2k \left[1+2\sqrt{\frac{1-t}{tk}}\right]^2.
\end{align}

Next, from Theorem \ref{thm:product-bound}, we have
\begin{align*}
H^{\min} (\Phi\otimes \overline \Phi )  &\leq 2 \log k - 2t \log k - t \log t - (1-t) \log (1-t)  \\
& \leq 2 \log k - 2t \log k - t \log t  +2t(1-t)
\end{align*}
for small enough $t \in (0,1)$.

Finally, let $t \sim k^{-r}$ with $1\leq r <2$, i.e., $\lim_{k\to\infty} \frac t {k^{-r}} \in (0, +\infty)$.
Then, for all $n \in \mathbb N$ and large $k\in\mathbb N$,
\[
H^{\min} (\Phi_n \otimes \overline \Phi_n ) -2 \log k \lesssim -  k^{-r} \log k,
\]
and  almost surely for  large $k\in\mathbb N$,
\[
 2 \lim_{n \to \infty} H^{min} (\Phi_n) - 2 \log k \gtrsim - k^{-r} . 
\]

Now, we have proven the following statement:
\begin{theorem}\label{thm:violation} 
Take the random quantum channels $\Phi_n$ as defined in \eqref{eq:RQ} with $t \sim k^{-r}$ and $1\leq r < 2$, 
then with probability one
\[
\limsup_{n \to \infty} H^{\min} (\Phi_n \otimes \overline \Phi_n )  
< 2 \lim_{n \to \infty} H^{min} (\Phi_n),
\]
for large enough $k$.
\end{theorem}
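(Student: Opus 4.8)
The plan is to assemble Theorem~\ref{thm:violation} from the three quantitative estimates already in hand: the lower bound on $\lim_n H^{\min}(\Phi_n)$ coming from Theorem~\ref{thm:bound2} together with the concavity-of-$\log$ inequality $\log k - H(X) \le k\,\Tr(X-\tilde I)^2$, and the upper bound on $H^{\min}(\Phi_n\otimes\overline\Phi_n)$ coming from Theorem~\ref{thm:product-bound} (via the Hayden--Winter Bell-state lemma). First I would fix the scaling $t\sim k^{-r}$ with $1\le r<2$ and substitute it into the lower bound. Since $t\le 1-\tfrac1k$ for $k$ large, Theorem~\ref{thm:bound2} applies almost surely, and plugging $\max_X\|\Phi_n(X)-\tilde I\|_2^2 \lesssim t^2\bigl[1+2\sqrt{(1-t)/(tk)}\bigr]^2$ into the inequality $\log k - H(X)\le k\Tr(X-\tilde I)^2$ yields, almost surely,
\[
2\lim_{n\to\infty}H^{\min}(\Phi_n) \;\ge\; 2\log k - 2t^2 k\Bigl[1+2\sqrt{\tfrac{1-t}{tk}}\Bigr]^2.
\]
With $t\sim k^{-r}$ the error term $2t^2k[1+2\sqrt{(1-t)/(tk)}]^2$ is of order $\max\{k^{1-2r},\,k^{-r}\}=k^{-r}$ for $r\ge1$, so the right side is $2\log k - O(k^{-r})$.

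Next I would treat the product channel. By Theorem~\ref{thm:product-bound}, valid for $t\ge k^{-2}$ (which holds since $r<2$), we have $H^{\min}(\Phi_n\otimes\overline\Phi_n)\le 2(1-t)\log k + h(t)$ for every $n$, hence also $\limsup_n H^{\min}(\Phi_n\otimes\overline\Phi_n)\le 2\log k - 2t\log k + h(t)$. Bounding the binary entropy by $h(t)= -t\log t-(1-t)\log(1-t)\le -t\log t + 2t(1-t)\le -t\log t + 2t$, and using $t\sim k^{-r}$ so that $-t\log t \sim r\,k^{-r}\log k$, the dominant term is $-2t\log k\sim -2k^{-r}\log k$, which beats $-t\log t$ by a logarithmic factor. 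Thus
\[
\limsup_{n\to\infty}H^{\min}(\Phi_n\otimes\overline\Phi_n) - 2\log k \;\lesssim\; -k^{-r}\log k.
\]

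Finally I would compare the two displays. The product channel falls below $2\log k$ by an amount $\gtrsim k^{-r}\log k$, while twice the single-channel minimum output entropy falls below $2\log k$ by only $O(k^{-r})$. Since $k^{-r}\log k \gg k^{-r}$ as $k\to\infty$, there is a threshold $k_0$ so that for all $k\ge k_0$,
\[
\limsup_{n\to\infty}H^{\min}(\Phi_n\otimes\overline\Phi_n) \;<\; 2\lim_{n\to\infty}H^{\min}(\Phi_n),
\]
almost surely, which is exactly the claim. The main obstacle — and the only genuinely delicate point — is bookkeeping of the almost-sure quantifiers: Theorem~\ref{thm:bound2} gives the $L^2$-concentration bound for the single channel $\Phi_n$ on a probability-one event, and one must check that the complementary-channel trick (output states of $\Phi_n$ and $\overline\Phi_n$ share nonzero spectra on rank-one inputs, so $H^{\min}(\overline\Phi_n)=H^{\min}(\Phi_n)$) together with the deterministic Theorem~\ref{thm:product-bound} lets everything be run on a single probability-one event uniformly in $k$; here one should fix a countable dense set of $k$'s or simply note the events are nested, so that "for large enough $k$, almost surely" upgrades to "almost surely, for large enough $k$." The rest is the elementary asymptotic comparison $k^{-r}\log k \gg k^{-r}$ already sketched in the text preceding the theorem.
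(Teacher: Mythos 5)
Your proof is correct and follows the same route as the paper: you lower-bound $2\lim_{n}H^{\min}(\Phi_n)$ via Theorem~\ref{thm:bound2} together with the concavity inequality $\log k - H(X)\le k\Tr(X-\tilde I)^2$ to get $2\log k - O(k^{-r})$, you upper-bound $\limsup_n H^{\min}(\Phi_n\otimes\overline\Phi_n)$ via Theorem~\ref{thm:product-bound} to get $2\log k - \Theta(k^{-r}\log k)$, and you compare the two deficits exactly as the paper does. One small slip in your bookkeeping: $-2t\log k\sim -2k^{-r}\log k$ and $-t\log t\sim rk^{-r}\log k$ are both $\Theta(k^{-r}\log k)$, so the former does not beat the latter by a logarithmic factor; what actually saves the estimate is that $r<2$ makes the net coefficient $r-2$ negative, so $-2t\log k - t\log t\sim(r-2)k^{-r}\log k\lesssim -k^{-r}\log k$, which is the conclusion you (correctly) reach anyway.
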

Note that $H^{min} (\Phi)=H^{min} (\overline \Phi)$ for any quantum channel $\Phi$. 
Theorem \ref{thm:violation} actually implies Theorem \ref{thm:violationF}. 
Indeed, take $t=\frac 1 k$ and the complementary channel of $\Phi$ yields the desired statement.

To conclude our paper, we discuss on  possible values of $k$ for the additivity violation. 
First, note that 
for $\lambda = (\lambda_1,\ldots,\lambda_k) \in K_{k,t}$ and $1\leq i \leq k$
\[
l_{k,t}:=1-\varphi\left(\frac{k-1}k,t\right) \leq \lambda_i \leq \varphi \left( \frac 1k, t \right) =:u_{k,t}
\]
where $\varphi(a,b)= a+b -2ab + 2\sqrt{ab(1-a)(1-b)}$, see \cite{CollinsNechita2011}.

Next, with these bounds we can control the error of Taylor expansion of $H(\cdot)$ around $(1/k,\ldots,1/k)$. 
In fact, for $\lambda \in K_{k,t}$ there exists $\gamma \in \Delta_k$ where $\gamma_i$ is between $\lambda_i$ and $1/k$ so that
\begin{align*}
H(\lambda) &= \log k - \frac k 2 \sum_{i=1}^k \left(\lambda_i - \frac 1k\right)^2 
+ \sum_{i=1}^k  \frac 1{6\gamma_i^2}   \left(\lambda_i - \frac 1k\right)^3 \\
& \geq \log k - \left[ \frac k 2 + \frac {u_{k,t}-1/k}{6 l_{k,t}^2} \right] \sum_{i=1}^k \left(\lambda_i - \frac 1k\right)^2 \\
&  \geq \log k - \left[ \frac k 2 + \frac {u_{k,t}-1/k}{6 l_{k,t}^2} \right] t^2 \left[1+2\sqrt{\frac{1-t}{tk}}\right]^2 =:f(k,t)
\end{align*}
where we used Theorem \ref{thm:bound2}.
Namely, we want the following function to be negative:
\begin{align*}
g(k,r) &= 2(1-t) \log k + h(t)  -2 f(k,t)
\end{align*}
to observe additivity violation.
Here, $t = 1/k^r$ with $r \in [1,2)$.

Finally, Figure \ref{fig:1} shows  contour lines of $g(k,t)$ drawn by \emph{Wolfram Mathematica}. 
In the negative region, additivity violation necessarily happens.   
Figure \ref{fig:2} indicates the minimum possible dimension for the violation based on our method;
$k=31114$ (with $t=k^{1.387}$), which gives $g(k,t) = - 6.71108 \times10^{-12}$.
This value of $k$ is smaller than $3.9 \times 10^4$ which was obtained in \cite{FKM2010} with $t=k^{-1}$ in a similar setting, but larger than $183$ obtained in \cite{BCN2013}, which is known to be the best estimate in this context.

\begin{figure}
\centering
\begin{minipage}{.5\textwidth}
  \centering
  \includegraphics[width=1\linewidth]{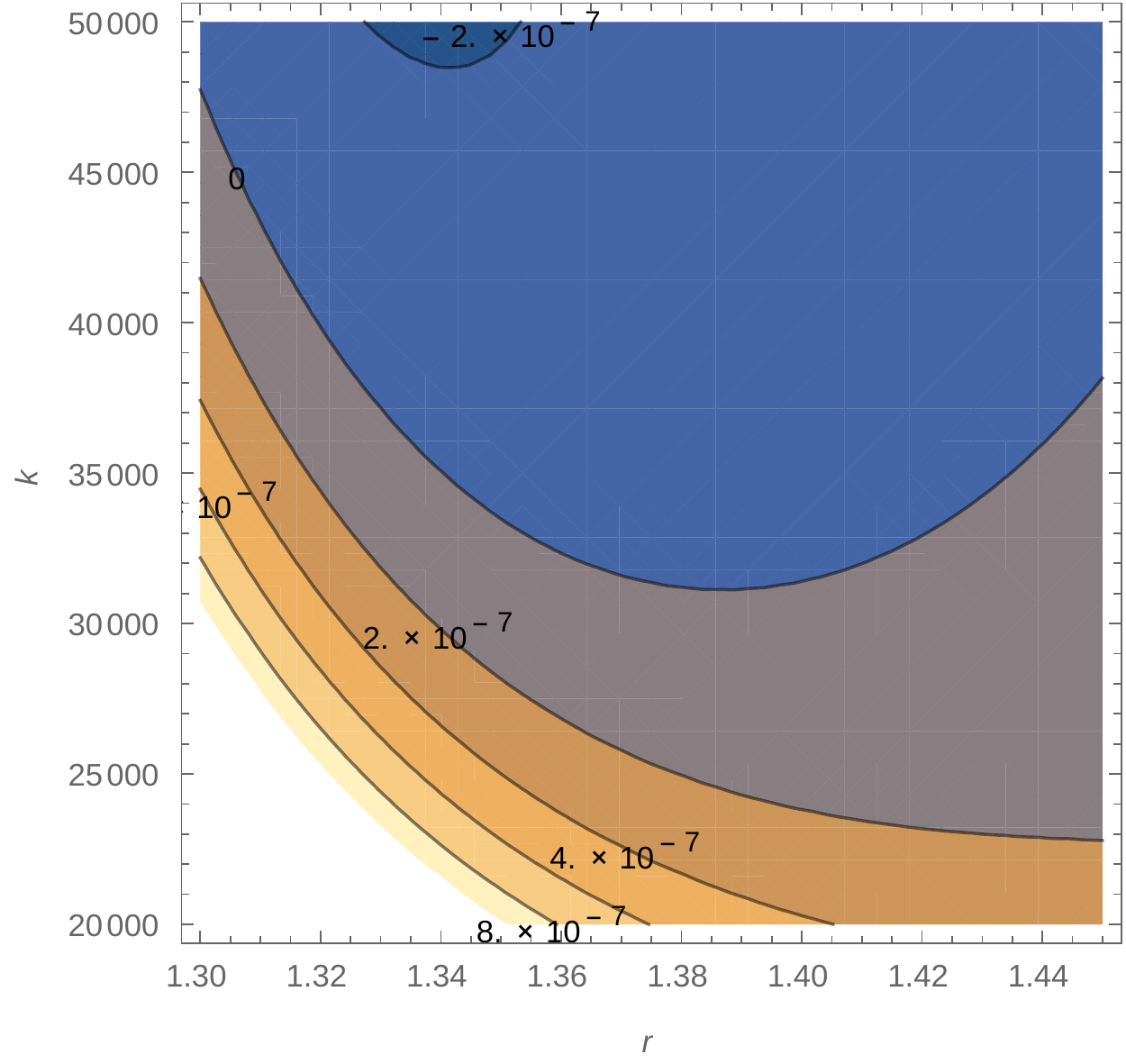}
  \captionof{figure}{Contour lines of $g(k,t)$.}
  \label{fig:1}
\end{minipage}%
\begin{minipage}{.5\textwidth}
  \centering
  \includegraphics[width=1\linewidth]{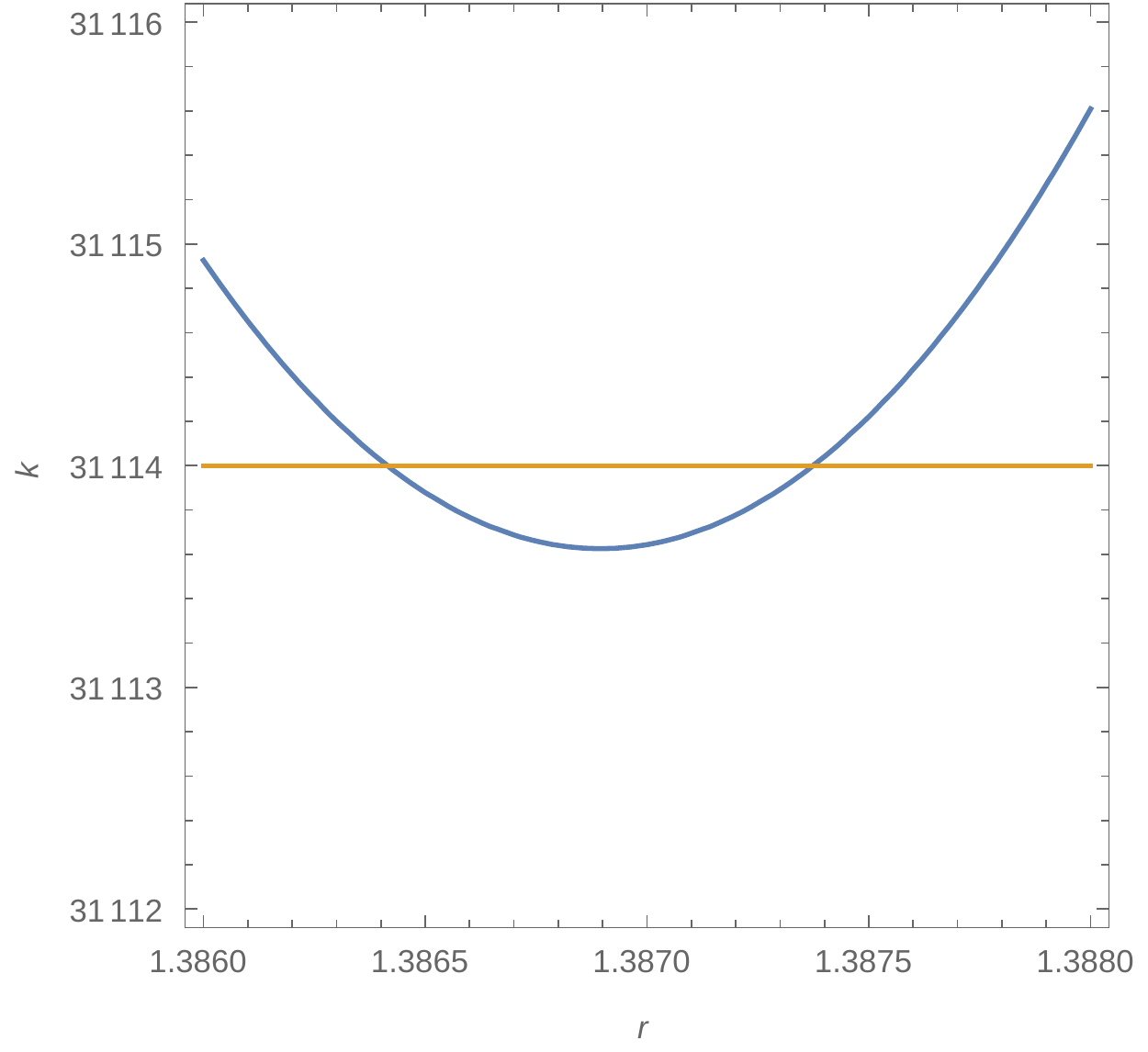}
  \captionof{figure}{$g(k,t)=0$ and $k=31114$.}
  \label{fig:2}
\end{minipage}
\end{figure}

\bibliographystyle{alpha}
\bibliography{ref_tnorm}

\end{document}